\documentclass[onefignum,onetabnum]{siamonline190516}
\usepackage{todonotes}

\usepackage[english]{babel}

\usepackage{url}            
\usepackage{booktabs}       
\usepackage{amsfonts}       
\usepackage{nicefrac}       
\usepackage{graphicx}
\usepackage{subfig}
\usepackage{amssymb}
\usepackage{amsmath}
\usepackage{float}
\usepackage{bbm}
\usepackage{caption}
\usepackage{xcolor}
\usepackage{epstopdf}
\ifpdf
  \DeclareGraphicsExtensions{.eps,.pdf,.png,.jpg}
\else
  \DeclareGraphicsExtensions{.eps}
\fi

\usepackage{enumitem}
\setlist[enumerate]{leftmargin=.5in}
\setlist[itemize]{leftmargin=.5in}



\renewenvironment{proof}{\textbf{Proof.}}{\QED\bigskip}


\newcommand{\BEAS}{\begin{eqnarray*}}
\newcommand{\EEAS}{\end{eqnarray*}}
\newcommand{\BEA}{\begin{eqnarray}}
\newcommand{\EEA}{\end{eqnarray}}
\newcommand{\BEQ}{\begin{equation}}
\newcommand{\EEQ}{\end{equation}}
\newcommand{\BIT}{\begin{itemize}}
\newcommand{\EIT}{\end{itemize}}
\newcommand{\BNUM}{\begin{enumerate}}
\newcommand{\ENUM}{\end{enumerate}}
\newcommand{\BPM}{\begin{pmatrix}}
\newcommand{\EPM}{\end{pmatrix}}

\newcommand{\BA}{\begin{array}}
\newcommand{\EA}{\end{array}}



\newcommand{\ones}{\mathbf 1}
\newcommand{\con}{\text{con}}
\newcommand{\pen}{\text{pen}}

\newcommand{\reals}{{\mathbb R}}



\newcommand{\Rank}{\mathop{\bf Rank}}

\newcommand{\diag}{\mathop{\bf diag}}



\newcommand{\Co}{{\mathop {\bf Co}}}

\newcommand{\QED}{~~\rule[-1pt]{6pt}{6pt}}





 %
 %





\newsiamremark{remark}{Remark}
\newsiamremark{hypothesis}{Hypothesis}
\crefname{hypothesis}{Hypothesis}{Hypotheses}
\newsiamthm{claim}{Claim}

\usepackage{amsopn}









\definecolor{citecolor}{rgb}{0,0.08,0.45}
\hypersetup{
  colorlinks=true,
  citecolor=citecolor,
  linkcolor=black,
  breaklinks=true
}
\definecolor{changescolor}{rgb}{0.8,0.1,0.65}

\headers{Approximation Bounds for Sparse Programs}{A. Askari, A. d'Aspremont, and L. El Ghaoui}

\title{Approximation Bounds for Sparse Programs\thanks{AdA would like to acknowledge support from the {\em ML and Optimisation} joint research initiative with the {\em fonds AXA pour la recherche} and Kamet Ventures, a Google focused award, as well as funding by the French government under management of Agence Nationale de la Recherche as part of the ``Investissements d'avenir'' program, reference ANR-19-P3IA-0001 (PRAIRIE 3IA Institute). LEG would like to acknowledge support from Berkeley Artificial Intelligence Research (BAIR) and Tsinghua-Berkeley-Shenzhen Institute (TBSI).    The authors would like to thank Zihao Chen and Nilesh Tripuraneni for helpful discussions. 
}}

\newcommand{\specificthanks}[1]{\@fnsymbol{#1}}
\usepackage[symbol]{footmisc}

\author{
  Armin Askari\thanks{UC Berkeley, Berkeley, CA
    (\email{aaskari@berkeley.edu}, \email{elghaoui@berkeley.edu})} \and \hspace{-2mm}\thanks{The Voleon Group}
  \and Alexandre d'Aspremont\thanks{CNRS \& École Normale Supérieure,
    (\email{aspremon@ens.fr}).}
  \and Laurent El Ghaoui \footnotemark[2]
}

\begin{document}


\maketitle

\begin{abstract}
We show that sparsity constrained optimization problems over low dimensional spaces tend to have a small duality gap.
We use the Shapley-Folkman theorem to derive both data-driven bounds on the duality gap, and an efficient primalization procedure to recover feasible points satisfying these bounds. These error bounds are proportional to the rate of growth of the objective with the target cardinality, which means in particular that the relaxation is nearly tight as soon as the target cardinality is large enough so that only uninformative features are added.
\end{abstract}

\begin{keywords}
Convex Relaxation, Sparsity, Duality Gap, Shapley-Folkman theorem.
\end{keywords}
\begin{AMS}
62F07, 90C25, 90C59, 52A23
\end{AMS}


\section{Introduction}
\label{sec:intro}
We study optimization problems with low rank data and sparsity constraints, written 
\BEQ\label{eq:primal-con}\tag{P-CON}
    p_{\con}(k) \triangleq \min_{\|w\|_0 \leq k} \; f(Xw; y) + \dfrac{\gamma}{2} \|w\|_2^2, 
\EEQ
in the variable $w\in\reals^m$, where $X \in \mathbb{R}^{n \times m}$ is assumed low rank, $y \in \mathbb{R}^n, \gamma > 0$ and  $k\geq 0$.  Here, $\|\cdot\|_0$ stands for the $l_0$-norm (cardinality) of its vector argument. We also study a penalized formulation of this problem written
\BEQ\label{eq:primal-pen}\tag{P-PEN}
    p_{\pen}(\lambda) \triangleq \min_{w} \; f(Xw; y) + \dfrac{\gamma}{2} \|w\|_2^2 + \lambda \|w\|_0 
\EEQ
in the variable $w\in\reals^m$, where $\lambda>0$. We provide explicit upper and lower bounds on $p_\con(k)$ and $p_\pen(\lambda)$ that are a function of the bidual problem and the numerical rank of $X$. We also provide a tractable procedure to compute primal feasible points $w$ that satisfy the aforementioned bounds. We first begin with the case where $f(\cdot)$ is convex and show how to extend the results to the case when $f(\cdot)$ is non-convex.

\paragraph{Related literature}
In a general setting \eqref{eq:primal-con} and \eqref{eq:primal-pen} are NP-hard \cite{Nata95}. A very significant amount of research has been focused on producing tractable approximations and on proving recovery under certain conditions. This is the case in compressed sensing for example, where work stemming from \cite{Dono04,Cand05} shows that $\ell_1$ like penalties recover sparse solutions under various conditions enforcing independence among sparse subsets of variables of cardinality at most $k$. 

The convex quadratic case (i.e. $f(Xw;y) = \|Xw - y\|_2^2 = w^\top Q w + 2y^\top w + y^\top y$ with $X^\top X = Q$) has been heavily studied. \cite{park2018semidefinite} for example relax \eqref{eq:primal-con} to a non-convex quadratically constrained quadratic program (QCQP) for which they invoke the S-procedure to arrive at a convex problem; they also draw a connection between their semidefinite relaxation and a probabilistic interpretation to construct a simple randomized algorithm. In \cite{pilanci2015sparse}, the authors obtain a semidefinite programming (SDP) relaxation of the problem. They also consider the cardinality-penalized version of \eqref{eq:primal-con} and use a convex relaxation that is connected with the reverse Huber penalty. In \cite{soubies2015continous}, the authors compute the biconjugate of the cardinality-penalized objective in one dimension and in the case when $Q$ is identity matrix, and compare the minimum of their problem using a penalty term inspired from the derivation of the biconjugate. In \cite{atamturk2019rank, wei2020convexification, atamturk2018sparse}, the authors take advantage of explicit structure of $Q$ (e.g. when $Q$ is rank one) to arrive at tighter relaxations of \eqref{eq:primal-con} by considering convex hulls of perspective relaxations of the problem. They additionally study the case when there is a quadratic penalty on consecutive observations for smoothness considerations. In \cite{xie2018scalable}, the authors show the equivalence between many of the formulations derived in the above papers and provide scalable algorithms for solving the convex relaxations of \eqref{eq:primal-con}. In \cite{gao2013optimal}, the authors take a different approach by looking at the Lagrangian dual of the problem and decoupling the ellipsoidal level sets by considering separable outer approximations of the quadratic program defining the portfolio selection problem. The non-convex quadratic case has also been studied. Namely, it is a well known fact that a quadratic optimization with one quadratic constraint has zero duality gap and can be solved exactly via SDP even when the quadratic forms are non-convex (see e.g. \cite[Appendix\,B]{Boyd04}).

The Shapley-Folkman theorem, used to construct our bounds, was derived by Shapley and Folkman and first published in \cite{Star69}. In \cite{Aubi76}, the authors used the theorem to derive a priori bounds on the duality gap in separable optimization problems, and showcased applications such as in unit commitment problems. Extreme points of the set of solutions of a convex relaxation are then used to produce good approximations and \cite{Udel16} describes a randomized purification procedure to find such points with probability one.

\paragraph{Contributions}
While the works listed above do produce tractable relaxations of problems \eqref{eq:primal-con} and \eqref{eq:primal-pen} they do not yield a priori guarantees on the quality of these solutions (outside of the sparse recovery results mentioned above) and do not handle the generic low rank case. Our bounds are expressed in terms of the value of the bidual, the desired sparsity level and the rank of $X$, which is often low in practice.

Here, we use the Shapley-Folkman theorem to produce a priori bounds on the duality gap of problems \eqref{eq:primal-con} and \eqref{eq:primal-pen}. Our convex relaxations, which are essentially interval relaxations of a discrete reformulation of the sparsity constraint and penalty, produce both upper and lower approximation bounds on the optima of problems \eqref{eq:primal-con} and \eqref{eq:primal-pen}. These relaxations come with primalization procedures, that is, tractable schemes to construct feasible points satisfying these approximation bounds. Furthermore, these error bounds are proportional to the rate of growth of the objective with the target cardinality $k$, which means, in feature selection problems for instance, that the relaxations are nearly tight as soon as $k$ is large enough so that only uninformative features are added.

\subsection{Notation}
For a vector $u \in \mathbb{R}^m$, let $D(u) = \diag(u_1,\hdots,u_m)$. Let $M^\dagger$ denote the pseudoinverse of the matrix $M$. For a closed function $f(x)$, let $f^\ast(y) \triangleq \max_x x^\top y - f(x)$ denote the convex conjugate and let $f^{\ast \ast}(x)$ be the biconjguate (the conjugate of $f^\ast(x)$). Throughout the paper, we will assume $f$ is closed. If we additionally assume $f$ is convex, then  $f^{**}=f$ (see e.g. \cite[Prop.\,6.1.1]{Hiri96}). For simplicity, we will drop the explicit dependence of $y$ in our objective and simply write $f(Xw)$ instead.

\section{Bounds on the Duality Gap of the Constrained Problem}
\label{sec:l0_cons}

We derive upper and lower bounds on the constrained case \eqref{eq:primal-con} in this section. The penalized case will follow from similar arguments in Section \ref{sec:l0_pen}. In both sections we assume $f(\cdot)$ is convex and show in Section \ref{sec:noncvx} how the results change when $f(\cdot)$ is non-convex. We begin by forming the dual problem. 
\subsection{Dual Problem}
Note that the constrained problem is equivalent to
\begin{align}
    p_\con (k) = \min_{v, u \in \{0,1\}^m} \; f(XD(u) v) + \dfrac{\gamma}{2} v^\top D(u) v \; : \; \ones^\top u \leq k 
\end{align}
in the variables $u \in\reals^m$ and $ u \in \{0,1\}^m$, where $D(u) = \diag (u_1, \hdots, u_m)$, using the fact $D(u)^2 = D(u)$. Rewriting $f(\cdot)$ using its fenchel conjugate and swapping the outer min with the inner max to get a dual, we have $d_\con(k) \leq p_\con (k)$ by weak duality, with 
\begin{align*}
    d_\con(k) = \max_z - f^\ast(z) + \min_{v, u \in \{0,1\}^m} \dfrac{\gamma}{2} v^\top D(u) v + z^\top XD(u) v \; : \; \ones^\top u \leq k 
\end{align*}
in the variable $z\in\reals^n$. Solving the inner minimum over $v$, we have $v^\ast = -\tfrac{1}{\gamma} D(u)^\dagger D(u) X^\top z$. Plugging this back into our problem, we get
\begin{align*}
    d_\con(k) = \max_z - f^\ast(z) + \min_{u \in\{0,1\}^m} -\dfrac{1}{2\gamma} z^\top X D(u) D(u)^\dagger D(u) X^\top z \; : \; \ones^\top u \leq k 
\end{align*} 
Noting that $D(u) D(u)^\dagger D(u) = D(u)$ and that $z^\top X D(u) D(u)^\dagger D(u) X^\top z$ is increasing with $u$, we have
\begin{align*}
    d_\con(k) = \max_{z, \zeta} - f^\ast(z) - \dfrac{1}{2\gamma} s_k(\zeta \circ \zeta) \; : \; \zeta = X^\top z
\end{align*}
where $s_k(\cdot)$ denotes the sum of top $k$ entries of its vector argument (all nonnegative here).

\subsection{Bidual Problem} \label{sec:l0_cons_bd}
Rewriting $s_k(\cdot)$ in variational form, we have
\begin{align*}
    p^{\ast \ast}_\con(k) = d_\con (k) = \max_z \min_{u \in [0,1]^m} - f^\ast(z) -\dfrac{1}{2\gamma} z^\top X D(u) D(u)^\dagger D(u) X^\top z \; : \; \ones^\top u \leq k  
\end{align*}
Note this is equivalent to realizing that the inner minimization in $u$ in the previous section could be computed over the convex hull of the feasible set since the objective is in fact linear in $u$. Using convexity and Sion's minimax theorem we can exchange the inner min and max to arrive at
\begin{align*}
     p^{\ast \ast}_\con(k) = \min_{u \in [0,1]^m}  \max_z - f^\ast(z) -\dfrac{1}{2\gamma} z^\top X D(u) D(u)^\dagger D(u) X^\top z \; : \; \ones^\top u \leq k  
\end{align*}
Since $D(u) D(u)^\dagger D(u) \succeq 0$ for all feasible $u$, we have using conjugacy on the quadratic form 
\begin{align*}
    p^{\ast \ast}_\con(k) = \min_{u \in [0,1]^m}  \max_z \min_v - f^\ast(z) + \dfrac{\gamma}{2} v^\top D(u) v + z^\top XD(u) v \; : \; \ones^\top u \leq k  
\end{align*}
Switching the inner min and max again, using the definition of the biconjugate of $f(\cdot)$ and the relation that $f = f^{\ast \ast}$ since $f(\cdot)$ is closed and convex, we get
\begin{align}\label{eq:bidual-con}
    p^{\ast \ast}_\con(k) = \min_{v, u \in [0,1]^m}  f(XD(u) v) + \dfrac{\gamma}{2} v^\top D(u) v \; : \; \ones^\top u \leq k  \tag{BD-CON}
\end{align}
While \eqref{eq:bidual-con} is non-convex, setting $\tilde{v} = D(u) v$ means it is equivalent to the following convex program 
\BEQ\label{eq:pcon-z}
    p^{\ast \ast}_\con(k) = \min_{\tilde{v}, u \in [0,1]^m}  f(X\tilde{v}) + \dfrac{\gamma}{2} \tilde{v} D(u)^\dagger \tilde{v} \; : \; \ones^\top u \leq k  
\EEQ
in the variables $\tilde v,u\in\reals^m$, where $\tilde{v}^\top D(u)^\dagger \tilde{v}$ is jointly convex in $(\tilde{v}, u)$ since it can be rewritten as a second order cone constraint. To compute $(u^\ast, v^\ast)$, we solve the above problem and set $v^\ast = D(u^\ast)^\dagger \tilde{v}^\ast$. Note also that \eqref{eq:bidual-con} is simply the interval relaxation of the \eqref{eq:primal-con}. In fact, in the analysis that follows, we only rely on $\eqref{eq:bidual-con}$ and not the dual.


\subsection{Duality Gap Bounds and Primalization}
We now derive explicit upper and lower bounds on the optimum of \eqref{eq:primal-con} as a function of the rank of the data matrix $X$ and detail a procedure to compute a primal feasible solution that satisfies the bounds. An equivalent analysis will follow for the penalized case.

\begin{theorem}\label{prop:gap}
Suppose $X = U_r \Sigma_r V_r^\top$ is a compact, rank-$r$ SVD decomposition of $X$. From a solution $(v^\ast, u^\ast)$ of~\eqref{eq:bidual-con} with objective value $t^\ast$, with probability one, we can construct a point with at most $k + r + 2$ nonzero coefficients and objective value OPT satisfying
\BEQ\label{eq:gap}
p_\con(k + r + 2)  \leq  OPT \leq p^{\ast\ast}_\con(k)\leq p_\con(k) \tag{Gap-Bound}
\EEQ
by solving a linear program written
\BEQ\label{eq:lp-sf}
\BA{ll}
\mbox{minimize} &c^\top u \\
\text{subject to} &f(U_r z^\ast) + \sum_{i=1}^m u_i \tfrac{\gamma}{2} v_i^{\ast^2} = t^\ast \\
& \sum_{i=1}^m u_i \leq k \\
& \sum_{i=1}^m u_i \ell_i v_i^\ast = z^\ast\\
& u \in [0,1]^m
\EA\EEQ
in the variable $u\in\reals^m$ where $c \sim \mathcal{N}(0, I_m)$, $z^\ast=\Sigma_r V_r^\top D(u^\ast) v^\ast$.
\end{theorem}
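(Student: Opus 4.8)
The plan is to treat \eqref{eq:lp-sf} as a randomized purification of the bidual solution: its feasible set is a bounded polytope on which a generic linear objective is minimized at an extreme point, and the low rank of $X$ forces that extreme point to have only a few fractional entries, which is exactly the Shapley--Folkman mechanism in an explicit LP form.

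First I would check that \eqref{eq:lp-sf} is feasible, with $u^\ast$ itself as a witness. Writing $\ell_i$ for the $i$-th column of $\Sigma_r V_r^\top$, the third constraint reads $\Sigma_r V_r^\top D(u)v^\ast = z^\ast$, which holds at $u=u^\ast$ by the definition $z^\ast=\Sigma_r V_r^\top D(u^\ast)v^\ast$; the second constraint is precisely the cardinality constraint of \eqref{eq:bidual-con}; and since $z^\ast$ gives $XD(u^\ast)v^\ast=U_r z^\ast$, the objective value of $(v^\ast,u^\ast)$ in \eqref{eq:bidual-con} is $t^\ast=f(U_r z^\ast)+\sum_i u_i^\ast\tfrac{\gamma}{2}v_i^{\ast 2}$, which is the first constraint. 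Hence the feasible set is a nonempty polytope contained in $[0,1]^m$, so it is bounded and the linear program attains its minimum at an extreme point; because $c\sim\mathcal N(0,I_m)$ is generic, with probability one the minimizer $\bar u$ is unique and is an extreme point (no two of the finitely many vertices have equal $c^\top\!\cdot$).

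Next I would bound the number of fractional entries of $\bar u$. At a vertex the active constraints span $\reals^m$, and at most $r+2$ of them are not box constraints $0\le u_i\le 1$: the $r$ scalar rows of $\Sigma_r V_r^\top D(u)v^\ast=z^\ast$, the single equality $f(U_r z^\ast)+\sum_i u_i\tfrac{\gamma}{2}v_i^{\ast 2}=t^\ast$, and possibly the single inequality $\ones^\top u\le k$. So at least $m-r-2$ box constraints are active, i.e. at most $r+2$ coordinates $\bar u_i$ lie in $(0,1)$. Setting $\bar w=D(\bar u)v^\ast$, a coordinate is in the support of $\bar w$ only if $\bar u_i>0$; the number of indices with $\bar u_i=1$ is at most $k$ since $\sum_i\bar u_i\le k$, and the number with $\bar u_i\in(0,1)$ is at most $r+2$, so $\|\bar w\|_0\le k+r+2$ and $\bar w$ is feasible for \eqref{eq:primal-con} at level $k+r+2$, giving $p_\con(k+r+2)\le OPT:=f(X\bar w)+\tfrac{\gamma}{2}\|\bar w\|_2^2$. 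For the upper bound on $OPT$, the third constraint yields $X\bar w=U_r(\Sigma_r V_r^\top D(\bar u)v^\ast)=U_r z^\ast$, so $f(X\bar w)=f(U_r z^\ast)$, while $\bar u_i\in[0,1]$ gives $\bar u_i^2\le\bar u_i$ and hence $\tfrac{\gamma}{2}\|\bar w\|_2^2=\sum_i\tfrac{\gamma}{2}\bar u_i^2 v_i^{\ast 2}\le\sum_i\tfrac{\gamma}{2}\bar u_i v_i^{\ast 2}=t^\ast-f(U_r z^\ast)$ by the first constraint; adding gives $OPT\le t^\ast=p^{\ast\ast}_\con(k)$. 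Combining with $p^{\ast\ast}_\con(k)\le p_\con(k)$, which holds since \eqref{eq:bidual-con} is obtained from \eqref{eq:primal-con} by relaxing $\{0,1\}^m$ to $[0,1]^m$ (equivalently by weak duality), yields the chain \eqref{eq:gap}.

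The main obstacle is the vertex/dimension count: one must argue carefully that the equality $\Sigma_r V_r^\top D(u)v^\ast=z^\ast$ imposes at most $r$ independent linear conditions --- this is exactly where the low rank of $X$ enters --- and that even a degenerate extreme point leaves at least $m-r-2$ coordinates pinned to $\{0,1\}$, together with justifying the ``probability one'' reduction to a vertex for a generic linear objective over a bounded polytope. The remaining algebra --- that $U_r z^\ast$ reproduces the loss and that the relaxed linear term $\sum_i u_i\tfrac{\gamma}{2}v_i^{\ast 2}$ dominates the true ridge penalty via $\bar u_i^2\le\bar u_i$ --- is routine once the program is set up this way.
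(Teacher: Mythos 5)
Your proof is correct and takes essentially the same approach as the paper: feasibility of \eqref{eq:lp-sf} at $u^\ast$, a counting argument showing an optimal vertex of the LP has at most $r+2$ fractional coordinates (the paper phrases this as Shapley--Folkman plus a non-degenerate basic-feasible-solution count, while you argue it directly from the vertex structure of the polytope, which is the same mechanism and if anything handles degeneracy more cleanly), followed by the rounding step using $X\bar w = U_r z^\ast$ and $\bar u_i^2 \le \bar u_i$ to get $p_\con(k+r+2)\le OPT \le t^\ast = p^{\ast\ast}_\con(k) \le p_\con(k)$. Your primal point $\bar w = D(\bar u)v^\ast$ is exactly the paper's $(\tilde u,\tilde v)$ construction in different notation, so there is no gap.
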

\begin{proof}
Making the variable substitution $\Sigma_r V_r^\top D(u) v = z$, \eqref{eq:bidual-con} can be rewritten as
\begin{align*}
    p^{\ast\ast}_\con (k) = \min_{v, u \in [0,1]^m} \; f(U_r z) +\dfrac{\gamma}{2}  v^\top D(u) v \;\; : \;\; \ones^\top u \leq k, \; \Sigma_r V_r^\top D(u) v = z\; 
\end{align*}
and in epigraph form as
\[\BA{ll}
\mbox{minimize} & t \\
\text{subject to} & \begin{bmatrix}
t  \\
k \\
z 
\end{bmatrix} \in \begin{bmatrix}
f(U_r z) \\
\mathbb{R}^+ \\
0
\end{bmatrix} + \sum_{i=1}^m u_i \begin{bmatrix}
\tfrac{\gamma}{2} v_i^2 \\
1 \\
\ell_i v_i
\end{bmatrix}\\
& u \in[0,1]^m
\EA\]
in the variables $t\in\reals$, $z\in\reals^n$ and $v,u\in\reals^m$ where $\ell_i$ is the $i^\mathrm{th}$ column of $\Sigma_r V_r^\top$. Note the above is equivalent to
\[\BA{ll}
\mbox{minimize} & t \\
\text{subject to} & \begin{bmatrix}
t  \\
k \\
z 
\end{bmatrix} \in \begin{bmatrix}
f(U_r z) \\
\mathbb{R}^+ \\
0
\end{bmatrix} + \sum_{i=1}^m \Co \left\{ 0, \begin{bmatrix}
\tfrac{\gamma}{2} v_i^2 \\
1 \\
\ell_i v_i
\end{bmatrix} \right\}
\EA\]
in the variables $t\in\reals$, $z\in\reals^n$ and $v\in\reals^m$. The Shapley Folkman Theorem \cite{Star69} shows that for any
\begin{align*}
x \in \sum_{i=1}^m \Co \left\{ 0, \begin{bmatrix}
\tfrac{\gamma}{2} v_i^2 \\
1 \\
\ell_i v_i
\end{bmatrix} \right\}
\end{align*}
there exists some $\bar{u} \in [0,1]^m$ such that
\begin{align*}
x = \sum_{i \in S} \bar{u}_i \begin{bmatrix}
\tfrac{\gamma}{2} v_i^2 \\
1 \\
\ell_i v_i
\end{bmatrix}  + \sum_{i \in S^c} \bar{u}_i \begin{bmatrix}
\tfrac{\gamma}{2} v_i^2 \\
1 \\
\ell_i v_i
\end{bmatrix}
\end{align*}
where $S = \{i \; | \; \bar{u}_i \not = \{0,1\}\}$ and $|S| \leq r + 2$. Let $(t^\ast, z^\ast, v^\ast, u^\ast)$ be optimal for \eqref{eq:bidual-con}. Then there exists $s_1 \geq 0$ such that
\begin{align*}
\begin{bmatrix}
t^\ast  \\
k -s_1\\
z^\ast 
\end{bmatrix} = \begin{bmatrix}
f(U_r z^\ast) \\
0\\
0
\end{bmatrix} + \sum_{i=1}^m \Co \left\{ 0, \begin{bmatrix}
\tfrac{\gamma}{2} v_i^{\ast^2} \\
1 \\
\ell_i v_i^\ast
\end{bmatrix} \right\}
\end{align*}
From above, we know there exists $\bar{u}_i$ that satisfies these equality constraints, with at most $r + 2$ non-binary entries. In fact, we can compute this $\bar{u}$ by solving a linear program. To see this, given optimal $(t^\ast,z^\ast, v^\ast, u^\ast)$ for the epigraph reformulation of \eqref{eq:bidual-con}, consider the following linear program
\BEQ
\BA{ll}
\mbox{minimize} &c^\top u \\
\text{s.t} \;\;&f(U_r z^\ast) + \sum_{i=1}^m u_i \tfrac{\gamma}{2} v_i^{\ast^2} = t^\ast \\
& \sum_{i=1}^m u_i \leq k \\
& \sum_{i=1}^m u_i \ell_i v_i^\ast = z^\ast\\
& u \in [0,1]^m
\EA\EEQ
in the variable $u\in\reals^m$, where $c \sim \mathcal{N}(0, I_m)$. The problem is feasible since $u^\ast$ is feasible. This is a linear program with $2m + r + 2$ constraints, of which $m$ will be saturated at a non-degenerate basic feasible solution. This implies that at least $m-r-2$ constraints in $0 \leq u \leq 1$ are saturated with probability one, so at least $m-r-2$ coefficients of $u_i$ will be binary at the optimum.

Now, we primalize as follows: given $(t^\ast, z^\ast, v^\ast, \bar{u})$ where $\bar u$ is a non-degenerate basic feasible solution of the LP in~\eqref{eq:lp-sf}, let $S = \{ i\; | \; \bar{u}_i \not \in \{0,1\}\}$ and define
\begin{align*}
\begin{cases}
\tilde{v}_i = \bar{u}_i v_i^\ast, \;\; \tilde{u}_i = 1 & \text{$i \in S$} \\
\tilde{v}_i = v_i^\ast, \;\; \tilde{u}_i = \bar{u}_i & \text{$i \in S^c$}
\end{cases}
\end{align*}
We now claim that $(z^\ast, \tilde{v}, \tilde{u})$ is feasible for the primal problem $p_\con(k + r + 2)$ and has objective value smaller than $p^{\ast\ast}_\con(k)$. By construction, $\tilde{u} \in \{0,1\}^m$ and $\ones^\top \tilde{u} = \|\tilde{u}\|_0 \leq k + r + 2$. Furthermore, we have
\begin{align*}
z^\ast &= \sum_{i=1}^m \bar{u}_i \ell_i v_i^\ast \\
&= \sum_{i \in S} \bar{u}_i \ell_i v_i^\ast  + \sum_{i \in S^c} \bar{u}_i \ell_i v_i^\ast \\
&= \sum_{i \in S} \tilde{u}_i \ell_i \tilde{v_i} + \sum_{i \in S^c} \tilde{u}_i \ell_i \tilde{v_i}
\end{align*}
hence $(z^\ast, \tilde v, \tilde{u})$ is feasible for $p_\con(k + r + 2)$ in~\eqref{eq:pcon-z} and reaches an objective value OPT satisfying
\BEAS
t^\ast & = & f(U_r z^\ast) + \tfrac{\gamma}{2}\left(\sum_{i \in S}^m \bar{u}_i v_i^{\ast 2} + \sum_{i \in S^c}^m \bar{u}_i v_i^{\ast 2}\right) \\
& \geq & f(U_r z^\ast) + \tfrac{\gamma}{2}\left(\sum_{i \in S}^m \bar{u}_i^2  v_i^{\ast 2} + \sum_{i \in S^c}^m \bar{u}_i v_i^{\ast 2}\right) \\
& = & f(U_r z^\ast) + \tfrac{\gamma}{2}\left(\sum_{i \in S}^m \tilde{u}_i \tilde{v_i}^2+ \sum_{i \in S^c}^m \tilde{u}_i \tilde v_i^{2}\right) \\
& \equiv & OPT
\EEAS
Since $(z^\ast, \tilde{v}, \tilde{u})$ is feasible for $p_\con(k+r+2)$ we have $p_\con (k+r+2) \leq OPT$ and the result follows.
\end{proof}

This means that the primalization procedure will always reconstruct a point with at most $k + r + 2$ nonzero coefficients, with objective value at most $p_\con(k) - p_\con(k + r + 2)$ away from the optimal value $p_\con(k)$. Note that this bound does not depend on the value of $\gamma>0$ which could be arbitrarily small and could simply be treated as a technical regularization term.

\section{Bounds on the Duality Gap of the Penalized Problem}\label{sec:l0_pen}
The analysis for the penalized case is very similar to that of the constrained case. We start with deriving the dual problem.

\subsection{Dual Problem}
The penalized problem is equivalent to
\begin{align}
    p_\pen (\lambda) = \min_{v, u\in \{0,1\}^m} f(XD(u) v) + \dfrac{\gamma}{2}v^\top D(u) v + \lambda \ones^\top u 
\end{align}
in the variables $u,v \in\reals^m$. Rewriting $f$ using its fenchel conjugate, switching the min and max, and solving the minimization over $v$ we have
\begin{align*}
    d_\pen (\lambda) = \max_z -f^\ast(z) + \min_{u\in\{0,1\}^m} - \dfrac{1}{2\gamma} z^\top XD(u) D(u)^\dagger D(u) X^\top z + \lambda \ones^\top u,
\end{align*}
Using
\begin{align*}
    \min_{u\in\{0,1\}^m} - \dfrac{1}{2\gamma} z^\top XD(u) D(u)^\dagger D(u) X^\top z + \lambda \ones^\top u = \sum_{i=1}^m \min\Big(0, \lambda - \tfrac{1}{2\gamma} (X^\top z)_i^2\Big)
\end{align*}
the dual problem then becomes
\begin{align*}
    d_\pen(\lambda) = \max_z -f^\ast(z) +\sum_{i=1}^m \min\Big(0, \lambda - \tfrac{1}{2\gamma} (X^\top z)_i^2\Big)
\end{align*}
with $d_\pen(\lambda) \leq p_\pen(\lambda)$.
\subsubsection{Bidual}
Rewriting the second term of our objective in variational form we have
\begin{align*}
    p^{\ast \ast}_\pen (\lambda) = d^\ast(\lambda) = \max_z \min_{u \in [0,1]^m} -f^\ast(z)  - \dfrac{1}{2\gamma} z^\top XD(u) D(u)^\dagger D(u) X^\top z + \lambda \ones^\top u 
\end{align*}
Performing the same analysis as for the constrained case (c.f. Section \ref{sec:l0_cons_bd}), we get
\begin{align}\label{eq:bidual-pen}
    p^{\ast \ast}_\pen (\lambda) = \min_{v,u \in [0,1]^m} f(XD(u) v) + \dfrac{\gamma}{2} v^\top D(u) v + \lambda \ones^\top u \tag{BD-PEN}
\end{align}
in the variables $u,v \in\reals^m$, which can be recast as a convex program as above.

\begin{corollary}\label{prop:pen-gap}
Suppose $X = U_r \Sigma_r V_r^\top$ is a compact, rank-$r$ SVD decomposition of $X$. From a solution $(v^\ast, u^\ast)$ of~\eqref{eq:bidual-pen} with objective value $t^\ast$, with probability one, we can construct a point with objective value OPT satisfying
\BEQ\label{eq:pen-gap}
     p^{\ast\ast}_\pen(\lambda) \leq p_\pen (\lambda) \leq OPT \leq p^{\ast\ast}_\pen (\lambda) + \lambda (r + 1) \tag{Gap-Bound-Pen}
\EEQ
by solving a linear program written
\BEQ\label{eq:lp-sf-pen}
\BA{ll}
\mbox{minimize} & c^\top u \\
\text{s.t} \;\;& f(U_r z^\ast) + \sum_{i=1}^m u_i \tfrac{\gamma}{2} v_i^{\ast^2} + \lambda u_i = t^\ast \\
& \sum_{i=1}^m u_i \ell_i v_i^\ast = z^\ast\\
& u \in [0,1]^m
\EA\EEQ
in the variable $u\in\reals^m$ where $c \sim \mathcal{N}(0, I_m)$ and $z^\ast=\Sigma_r V_r^\top D(u^\ast) v^\ast$.
\end{corollary}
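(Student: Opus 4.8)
The plan is to repeat the proof of Theorem~\ref{prop:gap} almost verbatim; the only difference is that the cardinality-budget coordinate is gone and the term $\lambda\ones^\top u$ is folded into the objective coordinate. First I would substitute $z=\Sigma_r V_r^\top D(u) v$ in~\eqref{eq:bidual-pen}, so that $f(XD(u)v)=f(U_r z)$ and the objective reads $f(U_r z)+\sum_{i=1}^m u_i\big(\tfrac{\gamma}{2}v_i^2+\lambda\big)$, and rewrite the bidual in epigraph form as
\[\BA{ll}
\mbox{minimize} & t \\
\text{subject to} & \begin{bmatrix} t \\ z \end{bmatrix} \in \begin{bmatrix} f(U_r z) \\ 0 \end{bmatrix} + \sum_{i=1}^m \Co\left\{ 0,\ \begin{bmatrix} \tfrac{\gamma}{2}v_i^2+\lambda \\ \ell_i v_i \end{bmatrix} \right\}
\EA\]
in the variables $t\in\reals$, $z\in\reals^r$, $v\in\reals^m$, where $\ell_i$ is the $i$th column of $\Sigma_r V_r^\top$. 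The sets being summed now live in $\reals^{r+1}$ rather than $\reals^{r+2}$, which is precisely what replaces the $r+2$ of Theorem~\ref{prop:gap} by $r+1$.

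Next I would invoke the Shapley--Folkman theorem exactly as in Theorem~\ref{prop:gap}. Letting $(t^\ast,z^\ast,v^\ast,u^\ast)$ be optimal for~\eqref{eq:bidual-pen}, there is $\bar u\in[0,1]^m$ that reproduces the same pair $(t^\ast,z^\ast)$ and has at most $r+1$ non-binary coordinates; writing $S=\{i:\bar u_i\notin\{0,1\}\}$, we have $|S|\le r+1$. Such a $\bar u$ is computed as a non-degenerate basic feasible solution of~\eqref{eq:lp-sf-pen}: that LP is feasible, since $u^\ast$ satisfies its constraints by the definitions of $t^\ast$ and $z^\ast$; it has $2m+r+1$ constraints, of which $m$ are saturated at such a solution; and since the $r+1$ equalities are always saturated, at least $m-r-1$ of the bounds $0\le u\le 1$ are saturated, so at most $r+1$ entries of $\bar u$ are non-binary, with probability one over $c\sim\cN(0,I_m)$.

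I would then primalize by the same rounding rule: $\tilde v_i=\bar u_i v_i^\ast,\ \tilde u_i=1$ for $i\in S$, and $\tilde v_i=v_i^\ast,\ \tilde u_i=\bar u_i$ for $i\in S^c$. Since~\eqref{eq:primal-pen} carries no cardinality constraint, $(z^\ast,\tilde v,\tilde u)$ is feasible for $p_\pen(\lambda)$ as soon as $\tilde u\in\{0,1\}^m$, which holds by construction; moreover the $i$th entry of $D(\tilde u)\tilde v$ equals $\bar u_i v_i^\ast$, so $\Sigma_r V_r^\top D(\tilde u)\tilde v=\sum_i\bar u_i\ell_i v_i^\ast=z^\ast$ and the point attains
\[
OPT = f(U_r z^\ast) + \tfrac{\gamma}{2}\Big(\sum_{i\in S}\bar u_i^2 v_i^{\ast 2}+\sum_{i\in S^c}\bar u_i v_i^{\ast 2}\Big) + \lambda\Big(|S|+\sum_{i\in S^c}\bar u_i\Big).
\]
Using the LP equality $t^\ast=f(U_r z^\ast)+\tfrac{\gamma}{2}\sum_i\bar u_i v_i^{\ast 2}+\lambda\sum_i\bar u_i$, this gives
\[
OPT-t^\ast = \tfrac{\gamma}{2}\sum_{i\in S}(\bar u_i^2-\bar u_i)v_i^{\ast 2} + \lambda\sum_{i\in S}(1-\bar u_i) \le \lambda(r+1),
\]
where the quadratic sum is $\le 0$ because $\bar u_i^2\le\bar u_i$ on $[0,1]$ and the penalty sum is $\le|S|\le r+1$. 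Combining this with $p^{\ast\ast}_\pen(\lambda)\le p_\pen(\lambda)$ — since~\eqref{eq:bidual-pen} is the interval relaxation of~\eqref{eq:primal-pen} — and with $p_\pen(\lambda)\le OPT$ from primal feasibility yields the chain~\eqref{eq:pen-gap}.

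I do not anticipate a real obstacle: the argument is structurally identical to Theorem~\ref{prop:gap}. The one point deserving care is the bookkeeping behind $OPT-t^\ast$, namely that pushing each $\bar u_i$ up to $1$ on $S$ can only decrease the quadratic contribution (convexity, via $\bar u_i^2\le\bar u_i$) while increasing the penalty contribution by at most $\lambda$ per index, and there are at most $|S|\le r+1$ such indices. The genericity ("with probability one") statement for~\eqref{eq:lp-sf-pen} is the same as in Theorem~\ref{prop:gap}.
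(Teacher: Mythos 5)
Your proposal is correct and follows essentially the same route as the paper, which simply declares the penalized case ``analogous to the constrained case'' with the LP replaced by~\eqref{eq:lp-sf-pen}; you have filled in exactly that analogy, including the correct dimension count (the budget coordinate disappears and $\lambda\ones^\top u$ is folded into the objective coordinate, giving at most $r+1$ non-binary entries) and the resulting bound $OPT - t^\ast \le \lambda(r+1)$.
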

\begin{proof}
The primalization procedure is analogous to the constrained case, the only difference being the linear program becoming~\eqref{eq:lp-sf-pen}. We then get the chain of inequalities in~\eqref{eq:pen-gap} which means that starting from an optimal point of~\eqref{eq:bidual-pen} the primalization procedure will generate a feasible point with objective value at most $\lambda (r + 1)$ larger than that of the original problem~\eqref{eq:primal-pen}.
\end{proof}

\subsection{Connections with other Relaxations}
We first draw the connection between the penalty term in the bidual and the reverse Huber penalty. The reverse Huber function is defined as
\begin{align*}
    B(\zeta) &= \dfrac{1}{2} \min_{0 \leq \nu \leq 1} \nu + \dfrac{\zeta^2}{\nu} \\
    &= \begin{cases}
    |\zeta| & \text{if } |\zeta| \leq 1 \\
    \dfrac{\zeta^2 + 1}{2} & \text{o.w}
    \end{cases}
\end{align*}
We have
\begin{align*}
    \min_{\substack{u \in [0,1]^m \\ \ones^\top u \leq k}} x^\top D(u)^{-1} x = \max_{t > 0} \sum_{i=1}^n t B\Big( \dfrac{|x_i|}{\sqrt{t}}\Big) - \dfrac{1}{2}tk
\end{align*}

There is a direct connection between the second representation of \eqref{eq:bidual-con} (based on the variable substitution $\tilde{v} = D(u) v$) and the well-known perspective based relaxation \cite{frangioni2006perspective} (a similar argument can also be made for \eqref{eq:bidual-pen}). Note that \eqref{eq:primal-con} is equivalent to
\begin{align*}
    \min_{x,u,v} f(x) + \ones^\top v \; : \; u \in \{0,1\}^m, \; \ones^\top u \leq k, \; u_i v_i \geq x_i^2,\; i = 1,\hdots,m
\end{align*}
To see this, assume that $x$ is optimal for \eqref{eq:primal-con}. If $u$ encodes the sparsity pattern of $x$, we simply set $v_i = x_i^2$ so we have $\ones^\top v = x^\top x$ and that triplet $(x,u,v)$ is feasible for the above problem. Similarly, if $(x,u,v)$ are optimal for the above representation, then $x_i = 0$ if $u_i = 0$ and $x_i^2 = v_i$ otherwise. Similarly, $\ones^\top v = x^\top x$ and $x$ is feasible for \eqref{eq:primal-con}. Relaxing $u \in \{u \; | \; u \in [0,1]^m, \; \ones^\top u \leq k\}$ and replacing $f^{\ast \ast}$ with $f$ results in the perspective relaxation of the problem which is equivalent to \eqref{eq:bidual-con}.

\subsection{Extension to Non-Convex Setting} \label{sec:noncvx}
The gap bounds derived above can be extended to the case when $f$ is non-convex. Starting from \eqref{eq:primal-con} and following the structure of \eqref{eq:bidual-con}, consider the relaxation
\begin{align*}
    p^{\ast \ast}_\con(k) = \min_{v, u \in [0,1]^m}  f^{\ast \ast}(XD(u) v) + \dfrac{\gamma}{2} v^\top D(u) v \; : \; \ones^\top u \leq k  
\end{align*}
where $f(\cdot)$ in~\eqref{eq:bidual-con} has been replaced by its convex envelope $f^{\ast \ast}(\cdot)$ (i.e. the largest convex lower bound on $f$). By construction, this constitutes a lower bound on \eqref{eq:primal-con}. The analysis follows the same steps as in the proof of Theorem \ref{prop:gap}, replacing $f$ with $f^{\ast \ast}$ everywhere. The only bound that changes is $p_\con(k+r+2) \leq OPT$ since the objective defining $OPT$ uses $f^{\ast \ast}$ while that defining $p_\con(k+r+2)$ uses $f$. For a non-convex function, we can define the lack of convexity $\rho(f) = \sup_w f(Xw) - f^{\ast \ast}(Xw)$ with $\rho(f) \geq 0$. We then have $-\rho(f) \leq f^{\ast \ast}(U_r z^\ast) - f(U_r z^\ast)$ and then chain of inequalities in~\eqref{eq:gap} becomes
\begin{align*}
    p_\con(k + r + 2) - \rho(f) \leq  OPT \leq p^{\ast\ast}_\con(k)\leq p_\con(k)
\end{align*}
The exact same analysis and reasoning can be applied to the penalized case to arrive at
\begin{align*}
    p^{\ast\ast}_\pen(\lambda) - \rho(f) \leq p_\pen (\lambda) - \rho(f) \leq OPT \leq p^{\ast\ast}_\pen (\lambda) + \lambda (r + 1)
\end{align*}

    

\section{Quadratically Constrained Sparse Problems}
\label{sec:relaxation_cons}

In this section, we consider a version of \eqref{eq:primal-con} where the $\ell_2$ penalty is replaced by a hard constraint. The explicit $\ell_2$ constraint proves useful to get tractable bounds when solving approximate versions of~\eqref{eq:primal-con} where $X$ has low numerical rank (see Section \ref{sec:general}). We follow the same analysis as before and derive similar duality gap bounds and primalization procedures. We omit some steps of the analysis for brevity and refer the reader to Sections \ref{sec:l0_cons} and \ref{sec:l0_pen} for more details. We assume $f$ is convex and can extend the analysis to the non-convex setting using the same arguments in Section \ref{sec:noncvx} (for brevity we omit this). We wish to point out that there is nothing enlightening about the proofs in this section and on a first pass the reader can skip directly to Section \ref{sec:general}.

\subsection{$\ell_2$--$\ell_0$ Constrained Optimization} 
As before, we first derive dual and bidual problems in the quadratically constrained case.
\subsubsection{Dual}
Note that the $\ell_2$-constrained problem is equivalent to
\begin{align*}
    p^\ast_\con (k) = \min_{v, u \in \{0,1\}^m} \; f(XD(u) v) \; : \; \ones^\top u \leq k, \; v^\top D(u) v \leq \gamma
\end{align*}
where $D(u) = \diag (u_1, \hdots, u_m)$ and we use the fact $D(u)^2 = D(u)$.  Rewriting $f$ using its fenchel conjugate, introducing a dual variable $\eta$ for the $\ell_2$ constraint, swapping the outer min with the inner max via weak duality, and solving the minimum over $v$ we have
\begin{align*}
    d^\ast_\con(k) = \max_{z, \eta \geq 0} - f^\ast(z) - \dfrac{\eta \gamma}{2} + \min_{u \in\{0,1\}^m} -\dfrac{1}{2\eta} z^\top X D(u) D(u)^\dagger D(u) X^\top z \; : \; \ones^\top u \leq k 
\end{align*} 
where $d^\ast_\con(k) \leq p^\ast_\con (k)$. This further reduces to
\begin{align*}
    d^\ast_\con(k) = \max_{z, \eta \geq 0} - f^\ast(z) - \dfrac{\eta \gamma}{2} - \dfrac{1}{2\eta} s_k(\zeta \circ \zeta) \; : \; \zeta = X^\top z
\end{align*}
where $s_k(\cdot)$ denotes the sum of top $k$ entries of its vector argument. Note the problem is convex since the latter term is the perspective function of $s_k(\zeta \circ \zeta)$.

\subsubsection{Bidual}
Rewriting $s_k(\cdot)$ in variational form, we have that
\begin{align*}
    p^{\ast \ast}_\con(k) = d^\ast_\con (k) = \max_{z, \eta \geq 0} \min_{u \in [0,1]^m} - f^\ast(z) - \dfrac{\eta \gamma}{2} -\dfrac{1}{2\eta} z^\top X D(u) D(u)^\dagger D(u) X^\top z \; : \; \ones^\top u \leq k  
\end{align*}
Swapping the min and max, and using the Fenchel conjugate of the quadratic form we have
\begin{align*}
     p^{\ast \ast}_\con(k) = \min_{u \in [0,1]^m}  \max_{z, \eta \geq 0} \min_v - f^\ast(z) - \dfrac{\eta \gamma}{2} + \dfrac{\eta}{2} v^\top D(u) v + z^\top X D(u) v \; : \; \ones^\top u \leq k  
\end{align*}
Switching the inner min and max again and using the definition of the biconjugate conjugate of $f(\cdot)$ and computing the maximum over $\eta$, we arrive at
\BEQ\label{eq:bidual-q}
    p^{\ast \ast}_\con(k) = \min_{v, u \in [0,1]^m}  f(XD(u) v) \; : \; \ones^\top u \leq k, \;\; v^\top D(u) v \leq \gamma
\EEQ
which can be rewritten as a convex program (c.f. Section \ref{sec:l0_cons_bd}).

\begin{corollary}\label{prop:gap-l2}
Suppose $X = U_r \Sigma_r V_r^\top$ is a compact, rank-$r$ SVD decomposition of $X$. From a solution $(v^\ast, u^\ast)$ of~\eqref{eq:bidual-q} with objective value $t^\ast$, with probability one, we can construct a point with objective value OPT satisfying
\BEQ\label{eq:gap-l2}
     p^\ast_\con(k + r + 2) - \rho(f) \leq  OPT \leq p^{\ast\ast}_\con(k)\leq p^\ast_\con(k) \tag{Gap-Bound2}
\EEQ
by solving a linear program written
\BEQ\label{eq:lp-sf-pen-q}
\BA{ll}
\mbox{minimize} & c^\top u \\
\text{subject to} \;\; 
&\sum_{i=1}^m u_i \leq k \\
& \sum_{i=1}^m u_i  v_i^{\ast^2}  \leq \gamma \\
& \sum_{i=1}^m u_i \ell_i v_i^\ast = z^\ast\\
& u \in [0,1]^m
\EA\EEQ
in the variable $u\in\reals^m$, where $c \sim \mathcal{N}(0, I_m)$ and $(t^\ast, v^\ast)$ are optimal for the bidual, with $z^\ast=\Sigma_r V_r^\top D(u^\ast) v^\ast $.
\end{corollary}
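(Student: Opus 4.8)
The plan is to mirror the proof of Theorem~\ref{prop:gap} essentially verbatim, adapting the epigraph reformulation so that the $\ell_2$ term appears as a hard constraint rather than inside the objective. First I would take a solution $(v^\ast,u^\ast)$ of~\eqref{eq:bidual-q} with value $t^\ast$, make the substitution $z = \Sigma_r V_r^\top D(u) v$, and write the bidual in epigraph form with a vector of affine/conic building blocks indexed by $i$: the $i$-th block contributes $u_i$ times the column $\bigl(0,\; 1,\; v_i^{\ast 2},\; \ell_i v_i^\ast\bigr)$ (reading off: the $f(U_r z)$ slack sits outside the sum, the cardinality coordinate gets a $1$, the $\ell_2$ coordinate gets $v_i^{\ast 2}$, and the last block of coordinates gets $\ell_i v_i^\ast$). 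The point $(t^\ast, k, \gamma, z^\ast)$ then lies in $\bigl(f(U_r z^\ast),\mathbb{R}^+,\mathbb{R}^+,0\bigr)$ plus $\sum_{i=1}^m \Co\{0,(\cdot)\}$, where now \emph{two} coordinates are relaxed to a half-line (cardinality and $\ell_2$) rather than one. Applying Shapley–Folkman to this sum, which lives in ambient dimension $r+2$ once the two inequality slacks are absorbed, yields $\bar u\in[0,1]^m$ reproducing the point with at most $r+2$ fractional entries; this $\bar u$ is exactly a basic feasible solution of the LP~\eqref{eq:lp-sf-pen-q}, so the same non-degeneracy / randomized-$c$ argument guarantees at least $m-r-2$ binary coordinates with probability one.

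Next I would run the identical primalization: set $S=\{i:\bar u_i\notin\{0,1\}\}$, put $\tilde v_i=\bar u_i v_i^\ast,\ \tilde u_i=1$ on $S$ and $\tilde v_i=v_i^\ast,\ \tilde u_i=\bar u_i$ on $S^c$. The linear constraint $\sum_i \bar u_i \ell_i v_i^\ast = z^\ast$ is unchanged, so $(z^\ast,\tilde v,\tilde u)$ still reconstructs $z^\ast$ and hence $f(U_r z^\ast)=f(X\tilde v)$; the cardinality bound gives $\|\tilde u\|_0\le k+r+2$; and for the $\ell_2$ constraint the key inequality $\sum_{i\in S}\bar u_i v_i^{\ast 2}\ge \sum_{i\in S}\bar u_i^2 v_i^{\ast 2}=\sum_{i\in S}\tilde u_i\tilde v_i^2$ (using $0\le\bar u_i\le 1$) combined with $\sum_{i\in S^c}\bar u_i v_i^{\ast 2}=\sum_{i\in S^c}\tilde u_i\tilde v_i^2$ shows $\tilde v^\top D(\tilde u)\tilde v \le \sum_i \bar u_i v_i^{\ast 2}\le\gamma$, so feasibility for $p^\ast_\con(k+r+2)$ holds. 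The objective of the primalized point is simply $\mathrm{OPT}=f(U_r z^\ast)$ (there is no $\ell_2$ term in the objective now), and since $\bar u$ is feasible for~\eqref{eq:lp-sf-pen-q} its cardinality and $\ell_2$ constraints hold, giving $\mathrm{OPT}=f(X\tilde v)\ge p^\ast_\con(k+r+2)$ in the convex case; the lack-of-convexity correction $\rho(f)$ is inserted exactly as in Section~\ref{sec:noncvx} via $-\rho(f)\le f^{\ast\ast}(U_r z^\ast)-f(U_r z^\ast)$. The upper bounds $\mathrm{OPT}\le p^{\ast\ast}_\con(k)\le p^\ast_\con(k)$ follow because $(z^\ast,\tilde v,\tilde u)$ is in particular feasible for the $u\in[0,1]^m$ relaxation with value $t^\ast=p^{\ast\ast}_\con(k)$ (here $\mathrm{OPT}=t^\ast$ since there is no quadratic slack to drop), and $p^{\ast\ast}_\con(k)$ is a relaxation of $p^\ast_\con(k)$.

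The one genuinely new bookkeeping point — and the step I expect to need the most care — is checking that the Shapley–Folkman dimension count still gives $r+2$ and not $r+3$: the ambient space of the vector sum has $1$ cardinality coordinate, $1$ $\ell_2$ coordinate, and $r$ coordinates from $z=\Sigma_r V_r^\top D(u)v\in\reals^r$ (using the rank-$r$ SVD), for a total of $r+2$, and the classical Shapley–Folkman bound on the number of non-extreme summands is the ambient dimension, hence $r+2$. This is why the statement uses the compact rank-$r$ SVD and why, as in the $\ell_2$-penalized Corollary~\ref{prop:pen-gap}, the final constant matches. Everything else is a line-by-line transcription of the proof of Theorem~\ref{prop:gap}, so I would state only the modified epigraph form, invoke Shapley–Folkman, point to the LP~\eqref{eq:lp-sf-pen-q} for the basic-feasible-solution argument, verify the three feasibility conditions (cardinality, $\ell_2$, and $z^\ast$-reconstruction) for the primalized triple, and conclude~\eqref{eq:gap-l2}, referring back to Section~\ref{sec:noncvx} for the $\rho(f)$ term.
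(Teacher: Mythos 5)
Your proposal is correct and follows essentially the same route as the paper's proof: the same epigraph reformulation with blocks $(0,1,v_i^{\ast 2},\ell_i v_i^\ast)$, the same Shapley--Folkman count of $r+2$ (the paper likewise disregards the zero objective coordinate), the same LP/basic-feasible-solution argument, and the same primalization yielding \eqref{eq:gap-l2}. Your explicit verification of the $\ell_2$ feasibility via $\bar u_i^2\le\bar u_i$ and the observation that $\mathrm{OPT}=t^\ast$ here are details the paper leaves implicit, but the argument is the same.
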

\begin{proof}
Following the analysis in Section \ref{sec:l0_cons}, let $X = U_r \Sigma_r V_r^\top$ be a compact, rank-$r$ SVD decomposition of $X$. Making the variable substitution $\Sigma_r V_r^\top D(u) v = z$, our bidual can be rewritten in epigraph form as
\[\BA{ll}
\mbox{minimize} & t \\
\text{subject to} & \begin{bmatrix}
t  \\
k \\
\gamma \\
z 
\end{bmatrix} 
\in \begin{bmatrix}
f(U_r z) \\
\mathbb{R}^+ \\
\mathbb{R}^+ \\
0
\end{bmatrix} + \sum_{i=1}^m \Co \left\{ 0, \begin{bmatrix}
0 \\
1 \\
v_i^2 \\
\ell_i v_i
\end{bmatrix} \right\}\\
& u \in [0,1]^m
\EA\]
in the variables $t\in\reals$, $z\in\reals^n$ and $v,u\in\reals^m$, where $\ell_i$ is the $i$th column of $\Sigma_r V_r^\top$. Note that From the Shapley Folkman lemma \cite{Star69}, there exists some $\bar{u} \in [0,1]^m$ such that
\begin{align*}
x = \sum_{i \in S} \bar{u}_i \begin{bmatrix}
0 \\
1 \\
v_i^2 \\
\ell_i v_i
\end{bmatrix}  + \sum_{i \in S^c} \bar{u}_i \begin{bmatrix}
0 \\
1 \\
v_i^2 \\
\ell_i v_i
\end{bmatrix}
\end{align*}
where $S = \{i \; | \; \bar{u}_i \not = \{0,1\}\}$ and $|S| \leq r + 2$ (note we disregard the first entry of the vector and hence it ir $r+2$ and not $r+3$). Now, let $(t^\ast, z^\ast, v^\ast, u^\ast)$ be optimal for the bidual. That means, there exists $s_1, s_2 \geq 0$ such that
\begin{align*}
\begin{bmatrix}
t^\ast  \\
k -s_1\\
\gamma - s_2 \\
z^\ast 
\end{bmatrix} = \begin{bmatrix}
f(U_r z^\ast) \\
0\\
0
\end{bmatrix} + \sum_{i=1}^m \Co \left\{ 0, \begin{bmatrix}
0 \\
1 \\
v_i^{\ast^2} \\
\ell_i v_i^\ast
\end{bmatrix} \right\}
\end{align*}
From above, we know there exists $\bar{u}_i$ that satisfies the above vector equality with at most $r + 2$ non-binary entries. We can compute this $\bar{u}$ via the linear program in~\eqref{eq:lp-sf-pen-q}. We then primalize precisely as before to arrive at 
the chain of inequalities
\begin{align*}
p^\ast_\con(k + r + 2)   \leq  OPT \leq p^{\ast\ast}_\con(k)\leq p^\ast_\con(k)
\end{align*}
which is the desired result.
\end{proof}

\subsection{$\ell_2$ Constrained, $\ell_0$ Penalized Optimization}
The analysis for the penalized case is very similar to that of Section \ref{sec:l0_pen}. 
\subsubsection{Dual}
The penalized problem is equivalent to
\begin{align}
    p^\ast_\pen (\lambda) = \min_{v, u\in \{0,1\}^m} f(XD(u) v) + \lambda \ones^\top u \; :\; v^\top D(u) v\leq \gamma
\end{align}
Using the fenchel conjugate of $f$, introducing a dual variable $\eta$ for the $\ell_2$ constrain, using weak duality and computing the minimization over $v$ we have
\begin{align*}
    d^\ast_\pen (\lambda) = \max_{z, \eta \geq 0} -f^\ast(z) - \dfrac{\eta \gamma}{2} + \min_{u\in\{0,1\}^m} - \dfrac{1}{2\eta} z^\top XD(u) D(u)^\dagger D(u) X^\top z + \lambda \ones^\top u
\end{align*}
Using the fact
\begin{align*}
    \min_{u\in\{0,1\}^m} - \dfrac{1}{2\eta} z^\top XD(u) D(u)^\dagger D(u) X^\top z + \lambda \ones^\top u = \sum_{i=1}^m \min\Big(0, \lambda - \tfrac{1}{2\eta} (X^\top z)_i^2\Big)
\end{align*}
the dual problem becomes
\begin{align*}
    d^\ast_\pen(\lambda) = \max_z -f^\ast(z)- \dfrac{\eta \gamma}{2} +\sum_{i=1}^m \min\Big(0, \lambda - \tfrac{1}{2\eta} (X^\top z)_i^2\Big)
\end{align*}
with $d^\ast_\pen(\lambda) \leq p^\ast_\pen(\lambda)$. The term $\tfrac{1}{2\eta} (X^\top z)_i^2$ is jointly convex since it can be recast as a second order cone constraint using the fact that $z_i^2 /\eta \leq t \Longleftrightarrow \Big\|\begin{bmatrix}
z_i \\
t - \eta 
\end{bmatrix}\Big\|_2 \leq \tfrac{1}{2}(t + \eta)$.
\subsubsection{Bidual}
Rewriting the second term of our objective in variational form we have
\begin{align*}
    p^{\ast \ast}_\pen (\lambda) = d^\ast(\lambda) = \max_{z, \eta \geq 0} \min_{u \in [0,1]^m} -f^\ast(z) - \dfrac{\eta \gamma}{2} - \dfrac{1}{2\gamma} z^\top XD(u) D(u)^\dagger D(u) X^\top z + \lambda \ones^\top u 
\end{align*}
Performing the same analysis as for the constrained case, we have that 
\begin{align}
    p^{\ast \ast}_\pen (\lambda) = \min_{v,u \in [0,1]^m} f(XD(u) v) + \lambda \ones^\top u \; : \; v^\top D(u) v \leq \gamma 
\end{align}
which can be recast as a convex program (c.f. Section \ref{sec:l0_cons_bd}).

\begin{corollary}\label{prop:pen-gap-l2}
Suppose $X = U_r \Sigma_r V_r^\top$ is a compact, rank-$r$ SVD decomposition of $X$. From a solution $(v^\ast, u^\ast)$ of~\eqref{eq:bidual-pen} with objective value $t^\ast$, with probability one, we can construct a point with objective value OPT satisfying
\BEQ\label{eq:pen-gap-2}
    p^{\ast\ast}_\pen(\lambda) \leq p_\pen (\lambda)  \leq OPT \leq p^{\ast\ast}_\pen (\lambda) + \lambda (r + 1) \tag{Gap-Bound-Pen-l2}
\EEQ
by solving a linear program written
\BEQ\label{eq:lp-sf-pen-q2}
\BA{ll}
\mbox{minimize} & c^\top u \\
\text{s.t} \;\;&f^{\ast \ast} (U_r z^\ast) + \lambda u_i = t^\ast \\
& \sum_{i=1}^m u_i  v_i^{\ast^2} \leq \gamma \\
& \sum_{i=1}^m u_i \ell_i v_i^\ast = z^\ast \\
& u \in [0,1]^m
\EA\EEQ
in the variable $u\in\reals^m$ with $z^\ast=\Sigma_r V_r^\top D(u^\ast) v^\ast$.
\end{corollary}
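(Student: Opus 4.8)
The plan is to replay, essentially verbatim, the argument of Theorem~\ref{prop:gap} and Corollary~\ref{prop:gap-l2}, the only changes being that the $\ell_2$ term is now a hard constraint and the cardinality term is now an $\ell_0$ penalty in the objective. Start from an optimal pair $(v^\ast,u^\ast)$ of the $\ell_2$-constrained, $\ell_0$-penalized bidual, put $z^\ast=\Sigma_r V_r^\top D(u^\ast)v^\ast$ so that $XD(u^\ast)v^\ast=U_r z^\ast$, and rewrite that bidual, after the substitution $z=\Sigma_r V_r^\top D(u)v$, in epigraph form
\[
\BA{ll}
\mbox{minimize} & t\\
\text{subject to} & \begin{bmatrix} t\\ \gamma\\ z\end{bmatrix}\in\begin{bmatrix} f(U_r z)\\ \mathbb{R}^+\\ 0\end{bmatrix}+\sum_{i=1}^m\Co\left\{0,\begin{bmatrix}\lambda\\ v_i^2\\ \ell_i v_i\end{bmatrix}\right\}
\EA
\]
where $\ell_i$ is the $i$th column of $\Sigma_r V_r^\top$: the $\gamma$-row carries the nonnegative slack of $v^\top D(u)v\le\gamma$ and the $z$-row must be matched exactly. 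When $f$ is non-convex one replaces $f$ here by its convex envelope $f^{\ast\ast}$ and carries the correction $\rho(f)$ exactly as in Section~\ref{sec:noncvx}.

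Next I would apply the Shapley--Folkman theorem to this Minkowski sum of $m$ segments, evaluated at the optimum: writing $[\,t^\ast,\ \gamma-s_2,\ z^\ast\,]$, with $s_2=\gamma-(v^\ast)^\top D(u^\ast)v^\ast\ge0$, as a point of the sum, there is $\bar u\in[0,1]^m$ realizing the same membership with $S=\{i:\bar u_i\notin\{0,1\}\}$ of size at most $r+1$. As in the earlier proofs this $\bar u$ is obtained as a non-degenerate basic feasible solution of the linear program~\eqref{eq:lp-sf-pen-q2}: the program is feasible ($u^\ast$ works), it has $2m+r+2$ constraints of which $m$ are active at a non-degenerate vertex, and since the coefficient vector $c\sim\mathcal{N}(0,I_m)$ is generic the minimizing vertex is non-degenerate with probability one, so all but at most $r+1$ of the $u_i$ sit at $0$ or $1$.

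Then I would primalize exactly as in Theorem~\ref{prop:gap}: set $\tilde u_i=1,\ \tilde v_i=\bar u_i v_i^\ast$ for $i\in S$ and $\tilde u_i=\bar u_i,\ \tilde v_i=v_i^\ast$ for $i\in S^c$. Reading the $z$-row gives $z^\ast=\sum_i\bar u_i\ell_i v_i^\ast=\sum_i\tilde u_i\ell_i\tilde v_i=\Sigma_r V_r^\top D(\tilde u)\tilde v$, hence $XD(\tilde u)\tilde v=U_r z^\ast$; reading the $\gamma$-row and using $\bar u_i^2\le\bar u_i$ gives $\tilde v^\top D(\tilde u)\tilde v=\sum_{i\in S}\bar u_i^2 v_i^{\ast 2}+\sum_{i\in S^c}\bar u_i v_i^{\ast 2}\le\sum_i\bar u_i v_i^{\ast 2}\le\gamma$. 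Thus $(\tilde v,\tilde u)$ is feasible for $p^\ast_\pen(\lambda)$, so $p^\ast_\pen(\lambda)\le OPT$; and since $f(U_r z^\ast)+\lambda\ones^\top\bar u=t^\ast=p^{\ast\ast}_\pen(\lambda)$ and $\ones^\top\tilde u=\ones^\top\bar u+\sum_{i\in S}(1-\bar u_i)$,
\[
OPT=f(U_r z^\ast)+\lambda\ones^\top\tilde u=p^{\ast\ast}_\pen(\lambda)+\lambda\sum_{i\in S}(1-\bar u_i)\le p^{\ast\ast}_\pen(\lambda)+\lambda|S|\le p^{\ast\ast}_\pen(\lambda)+\lambda(r+1).
\]
Together with weak duality $p^{\ast\ast}_\pen(\lambda)\le p^\ast_\pen(\lambda)$ this gives the chain~\eqref{eq:pen-gap-2}; in the non-convex case the leftmost inequality loses $\rho(f)$, because $OPT$ is then evaluated with $f^{\ast\ast}$ while $p^\ast_\pen$ uses $f$ and $f^{\ast\ast}(U_r z^\ast)\ge f(U_r z^\ast)-\rho(f)$.

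The only delicate point is the combinatorial bookkeeping, namely pinning down the constant in $|S|$: a direct count of the binding constraints at a vertex of~\eqref{eq:lp-sf-pen-q2} (one equality from the objective, the $\ell_2$-budget inequality, and the $r$ equalities matching $z^\ast$) naively gives $r+2$, so reaching the $r+1$ of~\eqref{eq:pen-gap-2} requires a careful look at whether the $\ell_2$-budget row is actually binding there. One also has to check, as above, that the rescaling $\tilde v_i=\bar u_i v_i^\ast$ simultaneously reproduces $z^\ast$, keeps $v^\top D(u)v\le\gamma$ feasible (this is exactly where $\bar u_i^2\le\bar u_i$ enters), and perturbs the objective only through $\lambda\sum_{i\in S}(1-\bar u_i)$. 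Beyond this, no genuinely new idea is required relative to the proofs of Theorem~\ref{prop:gap} and Corollaries~\ref{prop:pen-gap} and~\ref{prop:gap-l2}.
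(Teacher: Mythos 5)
Your route is exactly the paper's: the published proof of this corollary is a two-line remark that the primalization of Theorem~\ref{prop:gap} and Corollaries~\ref{prop:pen-gap} and~\ref{prop:gap-l2} goes through with the linear program replaced by~\eqref{eq:lp-sf-pen-q2}, and your epigraph reformulation, Shapley--Folkman step, LP-vertex argument, and the rescaling $\tilde v_i=\bar u_i v_i^\ast$, $\tilde u_i=1$ on $S$ are precisely that argument written out. The feasibility check for the hard $\ell_2$ budget (where $\bar u_i^2\le\bar u_i$ enters), the identity $z^\ast=\Sigma_r V_r^\top D(\tilde u)\tilde v$, and the bookkeeping $OPT-t^\ast=\lambda\sum_{i\in S}(1-\bar u_i)\le\lambda\lvert S\rvert$ are all correct.

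The one genuine gap is the constant, which you flag honestly but do not resolve, and your own count is the right one. The coupling rows here are the objective row, the $\ell_2$-budget row, and the $r$ rows matching $z^\ast$, so the Minkowski sum lives in dimension $r+2$; Shapley--Folkman, or equivalently the vertex count for~\eqref{eq:lp-sf-pen-q2} ($r+1$ equalities plus a possibly active budget inequality), gives only $\lvert S\rvert\le r+2$. Your earlier assertion that $\lvert S\rvert\le r+1$ therefore does not follow, and nothing forces the budget row to be slack at the minimizing vertex: the genericity of $c$ makes the minimizer a unique vertex with probability one, but says nothing about which facets that vertex lies on. Dropping the budget row from the Shapley--Folkman application to save a dimension does not work either, since the rescaled point then need not satisfy $\tilde v^\top D(\tilde u)\tilde v\le\gamma$; dropping the objective row instead loses control of $\ones^\top\bar u$ relative to $t^\ast$. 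So what your argument actually establishes is $OPT\le p^{\ast\ast}_\pen(\lambda)+\lambda(r+2)$. Note the paper's one-line proof does not address this point: the constant $\lambda(r+1)$ is carried over from Corollary~\ref{prop:pen-gap}, where there is no separate budget row, and with the hard $\ell_2$ constraint the honest constant delivered by this method is $\lambda(r+2)$. Apart from this (and the minor slip of calling the optimal vertex ``non-degenerate with probability one'' --- genericity of $c$ gives uniqueness of the minimizing vertex, not non-degeneracy of the polytope, which is the same looseness as in the paper), your proof is complete and matches the intended argument.
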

\begin{proof}
The primalization procedure is analogous to the constrained case with the only difference being the linear program becoming~\eqref{eq:lp-sf-pen-q2}. Performing the same analysis as for the penalized case, we have the chain of inequalities in~\eqref{eq:pen-gap-2}.
\end{proof}

\section{Tighter Bounds using the Numerical Rank}
\label{sec:general}

The duality gap bounds detailed above depend on $r$, the rank of the matrix $X$, which is an unstable quantity. In other words, a very marginal change in $X$ can have a significant impact on the quality of the bounds. In what follows, we will see how to improve these bounds when the matrix $X$ is approximately low rank. This will allow us to bound the duality gap using the (stable) numerical rank of $X$.

Starting from the $\ell_2-\ell_0$ constrained formulation, we formulate a perturbed version 
\BEQ\label{eq:pert-primal}
\BA{rll}
p^\ast_{\con}(k,X,\delta) =& \min &f(z; y) \\
& s.t. & Xw=z + \delta,\\
&  & \|w\|_0 \leq k \\
&  & \|w\|_2^2 \leq \gamma
\EA\EEQ
in the variables $w\in\reals^m$ and $z\in\reals^n$, where $\delta\in\reals^n$ is a perturbation parameter. Let
\[
X= X_r + \Delta X, \quad \Rank{X_r}=r,
\]
be a decomposition of the matrix $X$. For notational convenience, we set $p^\ast_\con(k,X) = p^\ast_\con(k,X,0)$. We have the following result.

\begin{proposition}\label{prop:soft}
Let $w^\star_r$ be the optimal solution of $p^\ast_{\con}(k,X_r,0)$ and $\nu_r^*$ the dual optimal variable corresponding to the equality constraint, and write $(w^\star,\nu^\star)$ the corresponding solutions for $p^\ast_{\con}(k,X,0)$. We have
\BEQ
p^\ast_{\con}(k,X,0) - \nu^{*T}\Delta Xw^\star_r \leq p^\ast_{\con}(k,X_r,0) \leq p^\ast_{\con}(k,X,0) - \nu^{*T}_r\Delta Xw^\star.
\EEQ
and the exact same bound when we start with the $\ell_2$ constrained, $\ell_0$ penalized formulation.
\end{proposition}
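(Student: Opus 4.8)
The plan is to reduce the two–sided estimate to a single one–sided inequality and then apply it twice, with the roles of $X$ and $X_r$ (equivalently, of $\Delta X$ and $-\Delta X$) interchanged. The key structural observation is that replacing the data matrix by a perturbed one amounts to perturbing the right–hand side $\delta$ of the equality constraint $Xw=z+\delta$, evaluated at a fixed $w$.

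Concretely, I would isolate the following one–sided lemma: if $N=M+E$, if $w_M^\star$ is optimal for $p^\ast_{\con}(k,M,0)$, and if $\nu_N^\star$ denotes the multiplier of the equality constraint at an optimal solution of $p^\ast_{\con}(k,N,0)$, then
\[
p^\ast_{\con}(k,N,0)\ \le\ p^\ast_{\con}(k,M,0)\ +\ \nu_N^{\star\top} E\, w_M^\star .
\]
The proof of the lemma has two steps. First, the pair $(w,z)=(w_M^\star,\,Mw_M^\star)$ is feasible for $p^\ast_{\con}(k,N,Ew_M^\star)$: the constraints $\|w\|_0\le k$ and $\|w\|_2^2\le\gamma$ do not involve the matrix, while $Nw_M^\star=(M+E)w_M^\star=Mw_M^\star+Ew_M^\star=z+Ew_M^\star$. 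Since this point has objective value $f(Mw_M^\star)=p^\ast_{\con}(k,M,0)$, we get $p^\ast_{\con}(k,N,Ew_M^\star)\le p^\ast_{\con}(k,M,0)$. Second, the multiplier $\nu_N^\star$ of the equality constraint is $-1$ times a subgradient of the value function $\delta\mapsto p^\ast_{\con}(k,N,\delta)$ at $\delta=0$, so $p^\ast_{\con}(k,N,0)\le p^\ast_{\con}(k,N,\delta)+\nu_N^{\star\top}\delta$ for every $\delta$; taking $\delta=Ew_M^\star$ and chaining with the first step gives the lemma. Applying it with $(M,N,E)=(X,X_r,-\Delta X)$ produces the right inequality $p^\ast_{\con}(k,X_r,0)\le p^\ast_{\con}(k,X,0)-\nu_r^{\star\top}\Delta X w^\star$, and with $(M,N,E)=(X_r,X,\Delta X)$ it produces $p^\ast_{\con}(k,X,0)\le p^\ast_{\con}(k,X_r,0)+\nu^{\star\top}\Delta X w_r^\star$, which is the left inequality after rearrangement. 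For the $\ell_2$–constrained, $\ell_0$–penalized formulation the argument is verbatim: the extra term $\lambda\ones^\top u$ in the objective does not involve the equality constraint and is carried unchanged through both steps.

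The step that requires care is the second one, namely that the equality–constraint multiplier $\nu_N^\star$ really is a (negative) subgradient of $p^\ast_{\con}(k,N,\cdot)$ at $0$. This is precisely tightness of the partial Lagrangian that dualizes only $Nw=z+\delta$, and it is not automatic because of the nonconvex constraint $\|w\|_0\le k$. I would recover it by fixing the optimal support $T_N$ of $w_N^\star$: restricted to that coordinate subspace the problem is a convex program in $(w,z)$, the point $w=0$ is a strict Slater point for $\|w\|_2^2\le\gamma$ since $\gamma>0$, so strong duality holds on the subspace and $\nu_N^\star$ keeps its sensitivity meaning; moreover the restricted problem has the same value as $p^\ast_{\con}(k,N,0)$ because $w_N^\star$ is feasible for it. Everything else — the feasibility bookkeeping and the two applications of the lemma — is routine.
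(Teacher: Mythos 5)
Your overall route coincides with the paper's: the one--sided estimate obtained by (i) observing that the optimizer for one matrix is feasible for the other matrix's problem once the right--hand side of $Xw=z+\delta$ is shifted by $E\,w^\star_M$, and (ii) invoking the multiplier sensitivity inequality for the equality constraint at the unperturbed problem, then applying this lemma twice with the roles of $X$ and $X_r$ (i.e.\ of $\Delta X$ and $-\Delta X$) exchanged. Step (i), the chaining, and the remark that the penalized formulation goes through verbatim all match the paper's argument.

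Where you diverge is in the justification of step (ii), and the patch you propose does not work as stated. Restricting to the optimal support $T_N$ of $w_N^\star$ gives a convex program whose value function $\delta\mapsto p_{T_N}(k,N,\delta)$ satisfies, by strong duality, $p_{T_N}(k,N,\delta)\ge p_{T_N}(k,N,0)-\nu^{\top}\delta$ and agrees with the true value at $\delta=0$; but for $\delta\neq 0$ one only has $p^\ast_{\con}(k,N,\delta)\le p_{T_N}(k,N,\delta)$, since the perturbation can move the optimal support off $T_N$. You have therefore lower--bounded the \emph{larger} (restricted) value function, which says nothing about the lower bound on $p^\ast_{\con}(k,N,\delta)$ that your lemma requires, namely $p^\ast_{\con}(k,N,0)\le p^\ast_{\con}(k,N,\delta)+\nu_N^{\star\top}\delta$; moreover the multiplier of the restricted program need not be the multiplier $\nu_N^\star$ appearing in the statement. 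To be fair, the paper's own proof is no more careful at this point: it writes ``weak duality yields'' the sensitivity inequality, whereas weak duality alone gives $p^\ast_{\con}(k,N,\delta)\ge d(0)-\nu_N^{\star\top}\delta$ with the optimal \emph{dual} value $d(0)$ in place of $p^\ast_{\con}(k,N,0)$, and the two coincide only if dualizing the equality constraint incurs no gap --- precisely what the $\ell_0$ constraint does not guarantee. So your instinct that this step is the delicate one is right, but the support--restriction argument does not close it; the clean fix is to state the inequality with the dual (or relaxed/bidual) value, which is in any case how the bound is consumed in Proposition \ref{prop:fullbound}, or to make the zero--gap assumption explicit.
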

\begin{proof}
Suppose $w^\star_r$ is an optimal solution of problem $p^\ast_{\con}(k,X_r,0)$, then $w^\star_r$ is also a feasible point of problem $p^\ast_{\con}(k,X,\Delta Xw^\star_r)$ because
\[
(X_r + \Delta X) w^\star_r = z + \Delta Xw^\star_r
\]
by construction. Since the two problems share the same objective function, this means $p^\ast_{\con}(k,X_r,\Delta Xw^\star_r) \leq p^\ast_{\con}(k,X_r,0)$. Now, weak duality yields 
\[
p^\ast_{\con}(k,X,\Delta Xw^\star_r) \geq p^\ast_{\con}(k,X,0) - \nu^{\star T} \Delta Xw^\star_r
\]
and
\[
p^\ast_{\con}(k,X_r,0) \leq p^\ast_{\con}(k,X_r,-\Delta Xw^\star) - \nu^{\star T}_r \Delta Xw^\star.
\]
We conclude using as above the fact that if $w^\star$ is an optimal solution of problem $p^\ast_{\con}(k,X,0)$, then $w^\star$ is also a feasible point of problem $p^\ast_{\con}(k,X_r,\Delta Xw^\star)$ because $X_r w^\star = z - \Delta Xw^\star$ which yields $p^\ast_{\con}(k,X_r,-\Delta Xw^\star) \leq p^\ast_{\con}(k,X,0)$ and the desired result. 
In the proof we only used weak duality and the equality constraint in $p^\ast_\con(k,X,\delta)$ to arrive at the result. Consequently, the exact same proof and bounds hold for $p^\ast_\pen(\lambda,X,\delta)$.
\end{proof}

We are now ready to combined the bound in Proposition \ref{prop:soft} with the bounds derived in Section \ref{sec:relaxation_cons}.

\begin{proposition}\label{prop:fullbound}
Let $w^\star_r$ be the optimal solution of $p^\ast_{\con}(k,X_r)$ and $\nu_r^*$ the dual optimal variable corresponding to the equality constraint, and write $(w^\star,\nu^\star)$ the corresponding solutions for $p^\ast_{\con}(k,X)$. Futhermore, let $\zeta_r = \sqrt{\gamma}\|\Delta X^\top \nu^\ast_r\|_2$ and $\zeta = \sqrt{\gamma}\|\Delta X^\top \nu^\ast\|_2$. We have
\BEQ
-\zeta_r - \zeta + p^{\ast \ast}_\con (k + r + 2, X_r) \leq  OPT \leq p_\con (k,X) - \zeta \leq p^{\ast \ast}_\con (k, X_r)
\EEQ
Similarly, for $p^\ast_{\pen}(\lambda,X_r)$ we have
\BEQ
-\zeta_r -\zeta + p^{\ast \ast}_{\pen}(\lambda, X_r) \leq p_{\pen}(\lambda, X) - \zeta \leq OPT \leq p^{\ast \ast}_{\pen}(\lambda, X_r) + \lambda(r+1) 
\EEQ
\end{proposition}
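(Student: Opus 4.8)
The idea is to chain together the two previously established bounds: Proposition~\ref{prop:soft}, which controls the effect of replacing $X$ by its rank-$r$ approximation $X_r$, and Corollary~\ref{prop:gap-l2} (resp.\ Corollary~\ref{prop:pen-gap-l2}), which controls the gap introduced by the Shapley--Folkman primalization on the problem whose data matrix is exactly $X_r$. The quantity $OPT$ in the statement is the objective value of the point produced by running the primalization procedure of Section~\ref{sec:relaxation_cons} \emph{on the rank-$r$ problem $p^\ast_\con(k,X_r)$}; that point has at most $k+r+2$ nonzeros, so by Corollary~\ref{prop:gap-l2} it satisfies
\[
p^\ast_\con(k+r+2,X_r) \leq OPT \leq p^{\ast\ast}_\con(k,X_r) \leq p^\ast_\con(k,X_r).
\]
This already gives the right-most pieces of the claimed chain once we express things back in terms of $X$.

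**Key steps, in order.** First I would convert the perturbation bound of Proposition~\ref{prop:soft} into the form used here. Proposition~\ref{prop:soft} gives $p^\ast_\con(k,X,0) - \nu^{\star T}\Delta X w^\star_r \leq p^\ast_\con(k,X_r,0) \leq p^\ast_\con(k,X,0) - \nu_r^{\star T}\Delta X w^\star$. Now bound each correction term by Cauchy--Schwarz: $|\nu^{\star T}\Delta X w^\star_r| = |(\Delta X^\top \nu^\star)^\top w^\star_r| \leq \|\Delta X^\top \nu^\star\|_2 \|w^\star_r\|_2 \leq \sqrt{\gamma}\,\|\Delta X^\top\nu^\star\|_2 = \zeta$, using the feasibility constraint $\|w^\star_r\|_2^2 \leq \gamma$ from $p^\ast_\con(k,X_r)$; similarly $|\nu_r^{\star T}\Delta X w^\star| \leq \sqrt{\gamma}\,\|\Delta X^\top\nu_r^\star\|_2 = \zeta_r$ using $\|w^\star\|_2^2\leq\gamma$. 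So
\[
p^\ast_\con(k,X) - \zeta \leq p^\ast_\con(k,X_r) \leq p^\ast_\con(k,X) + \zeta_r,
\]
and the same two-sided estimate applied at cardinality $k+r+2$ gives $p^\ast_\con(k+r+2,X) - \zeta \leq p^\ast_\con(k+r+2,X_r)$, i.e.\ $p^\ast_\con(k+r+2,X_r) \geq p_\con(k+r+2,X) - \zeta$ up to adjusting which of $\zeta,\zeta_r$ appears on which side — one must be a little careful here about whether the relevant dual variable is $\nu^\star$ or $\nu_r^\star$ at the two cardinality levels, but in all cases the correction is at most $\zeta+\zeta_r$ in absolute value. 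Second, I would assemble the chain: from the primalization inequality $OPT \leq p^{\ast\ast}_\con(k,X_r)$ directly; from $OPT \geq p^\ast_\con(k+r+2,X_r) \geq p^\ast_\con(k+r+2,X) - \zeta \geq \dots$, and then downshift once more using the perturbation estimate to absorb $X_r \to X$ inside $p^{\ast\ast}$, producing the $-\zeta_r - \zeta$ term; and $p_\con(k,X) - \zeta \leq p^\ast_\con(k,X_r) $ — wait, more precisely $p_\con(k,X) - \zeta$ sits below $p^{\ast\ast}_\con(k,X_r)$ by combining $OPT \leq p^{\ast\ast}_\con(k,X_r)$ with the lower estimate, so the middle inequality $OPT \leq p_\con(k,X) - \zeta$... is \emph{not} quite right as a direct consequence and in fact should read $p_\con(k,X)-\zeta \le p^{\ast\ast}_\con(k,X_r)$, i.e.\ the displayed chain is to be read as a sequence of legitimate two-term inequalities rather than a monotone cascade; I would present it as the four separate inequalities that Proposition~\ref{prop:soft} and Corollary~\ref{prop:gap-l2} yield and note they combine into the stated display. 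Third, for the penalized case, repeat verbatim with Corollary~\ref{prop:pen-gap-l2} in place of Corollary~\ref{prop:gap-l2}: the primalization there costs $\lambda(r+1)$ rather than changing the cardinality, so no cardinality shift occurs and the only perturbation corrections are again $\zeta$ and $\zeta_r$, giving $-\zeta_r - \zeta + p^{\ast\ast}_\pen(\lambda,X_r) \leq p_\pen(\lambda,X) - \zeta \leq OPT \leq p^{\ast\ast}_\pen(\lambda,X_r) + \lambda(r+1)$.

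**Main obstacle.** The routine part is the Cauchy--Schwarz estimates; the genuinely delicate bookkeeping is tracking \emph{which} problem each quantity refers to ($X$ vs.\ $X_r$, cardinality $k$ vs.\ $k+r+2$, and which dual optimal variable $\nu^\star$ vs.\ $\nu_r^\star$ enters each correction term) so that the two lemmas glue without a sign error or a double-counted $\zeta$. In particular one must apply Proposition~\ref{prop:soft} at the correct cardinality level each time it is invoked, and verify that $OPT$ — defined via the primalization on $X_r$ — is the same object feeding into both ends of the chain. Once the dictionary between the two propositions is pinned down the combination is immediate.
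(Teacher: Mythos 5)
Your proposal takes essentially the same route as the paper: the paper's own proof is a two-line sketch that writes $p_{\con}(k,X)=\bigl(p_{\con}(k,X)-p_{\con}(k,X_r)\bigr)+p_{\con}(k,X_r)$, bounds the difference via Proposition~\ref{prop:soft} together with Cauchy--Schwarz (using the $\ell_2$-ball feasibility $\|w\|_2^2\le\gamma$ to obtain $\zeta,\zeta_r$, exactly as you do), and then glues in the bounds of Corollaries~\ref{prop:gap-l2} and~\ref{prop:pen-gap-l2}. The bookkeeping concern you flag is warranted rather than a defect of your argument: the paper's sketch likewise does not justify the constrained display as a monotone cascade (in particular $OPT \le p_{\con}(k,X)-\zeta$ and $p_{\con}(k,X)-\zeta\le p^{\ast\ast}_{\con}(k,X_r)$ do not follow directly from the two ingredients, which only give $p_{\con}(k,X)-\zeta\le p_{\con}(k,X_r)$ and the application of Proposition~\ref{prop:soft} at cardinality $k+r+2$ involves different dual variables), so your reading of the chain as separate two-term inequalities, together with your fully verified penalized chain, is at least as complete as the original proof.
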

\begin{proof}
Starting from $p_{\con}(k,X) = p_{\con}(k,X) - p_{\con}(k,X_r) + p_{\con}(k,X_r)$, upper and lower bounding $p_{\con}(k,X) - p_{\con}(k,X_r)$ using Proposition \eqref{prop:soft} and the Cauchy-Schwarz inequality, and using the bounds derived in Section \ref{sec:relaxation_cons} the result follows. The proof for the penalized case is identical.
\end{proof}
\section{Experiments}
\label{sec:expt}

\subsection{Experiment 1: Duality gap bounds}
In this experiment, we generate synthetic data to illustrate the duality gap bounds derived in Sections \ref{sec:l0_cons} and \ref{sec:l0_pen}. We plot these  bounds for the $f(Xw;y) = \tfrac{1}{2n} \|Xw-y\|_2^2 $ (linear regression) and $f(Xw;y) = \tfrac{1}{n}\sum_{i=1}^n \log (1 + \exp(-y_i(x_i^\top w)))$ (logistic regression). Note that both functions are convex and closed; hence $f^{\ast \ast} = f$ and $\rho(f) = 0$. Specifically, we generate samples $X \in \mathbb{R}^{1000 \times 100}$ with $\text{rank}(X) = 10$ by first generating $X_{ij} \sim \mathcal{N}(0,1)$ and then taking a rank-10 SVD. We generate $\beta \in \mathbb{R}^{100}$ with $\beta_i \sim \mathcal{N}(0,25)$ and $\|\beta\|_0 = 10$. In the case of $\ell_2$ loss, we set $y = X\beta + \epsilon$ and for the logistic loss we set $y = 2\text{Round}(\text{Sigmoid}(X\beta + \epsilon)) - 1 \in \{-1,1\}^n$ where $\epsilon_i \sim \mathcal{N}(0,1)$. For both models we add a ridge penalty $\tfrac{\gamma}{2}\|w\|_2^2$ with $\gamma = 0.01$. For the regression task, we use a $\ell_0$-penalty while for the classification task, we use a $\ell_0$-constraint. Figure \ref{fig:l2pen} shows the primalized optimal values as well as the upper and lower bounds derived earlier. When running the primalization procedure, we pick twenty random linear objectives and show the standard deviation in the value OPT.

\begin{figure}[ht!]
  \centering
  \begin{minipage}[b]{0.49\textwidth}
    \includegraphics[width=\textwidth]{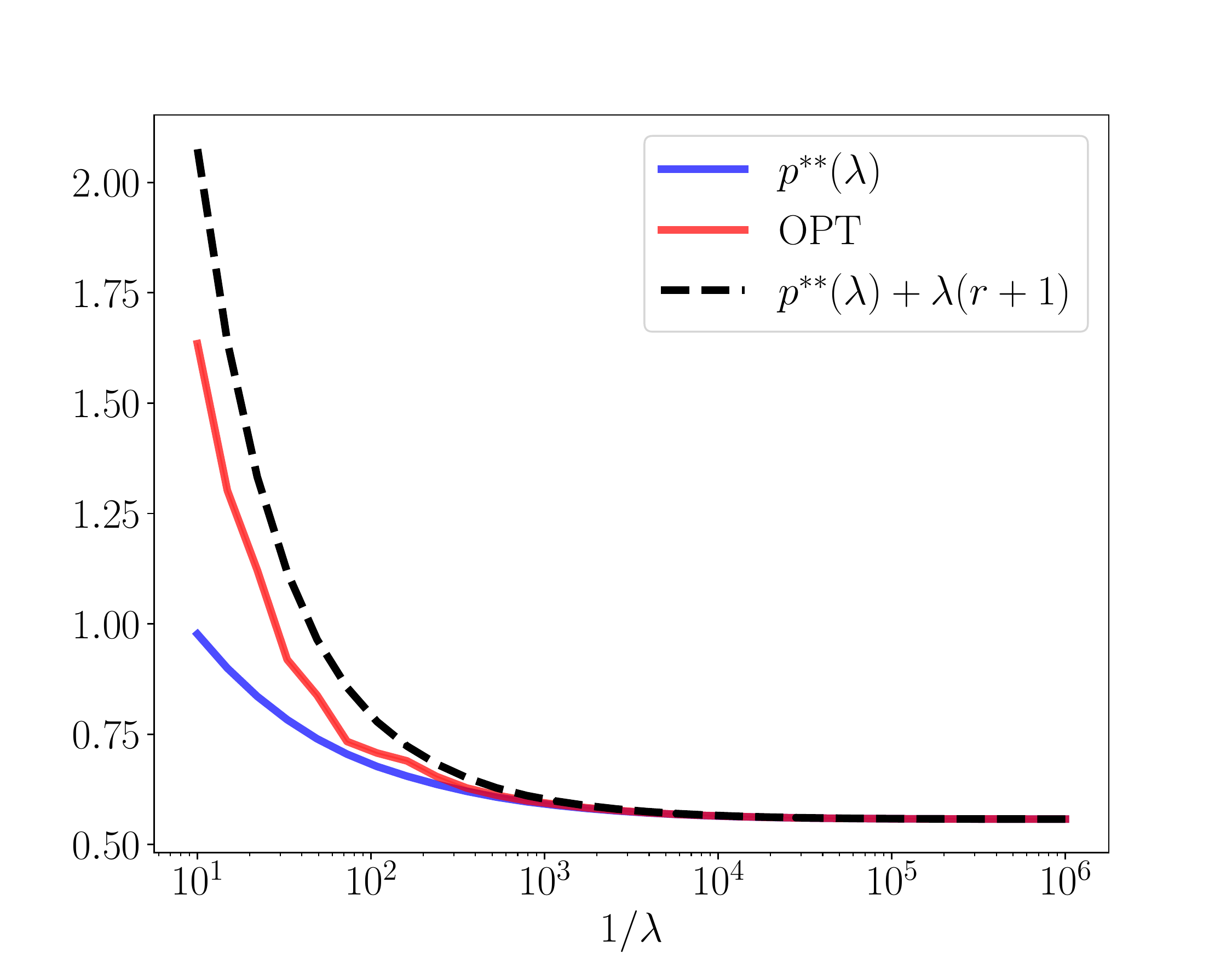}
  \end{minipage}
  \hfill
  \begin{minipage}[b]{0.49\textwidth}
    \includegraphics[width=\textwidth]{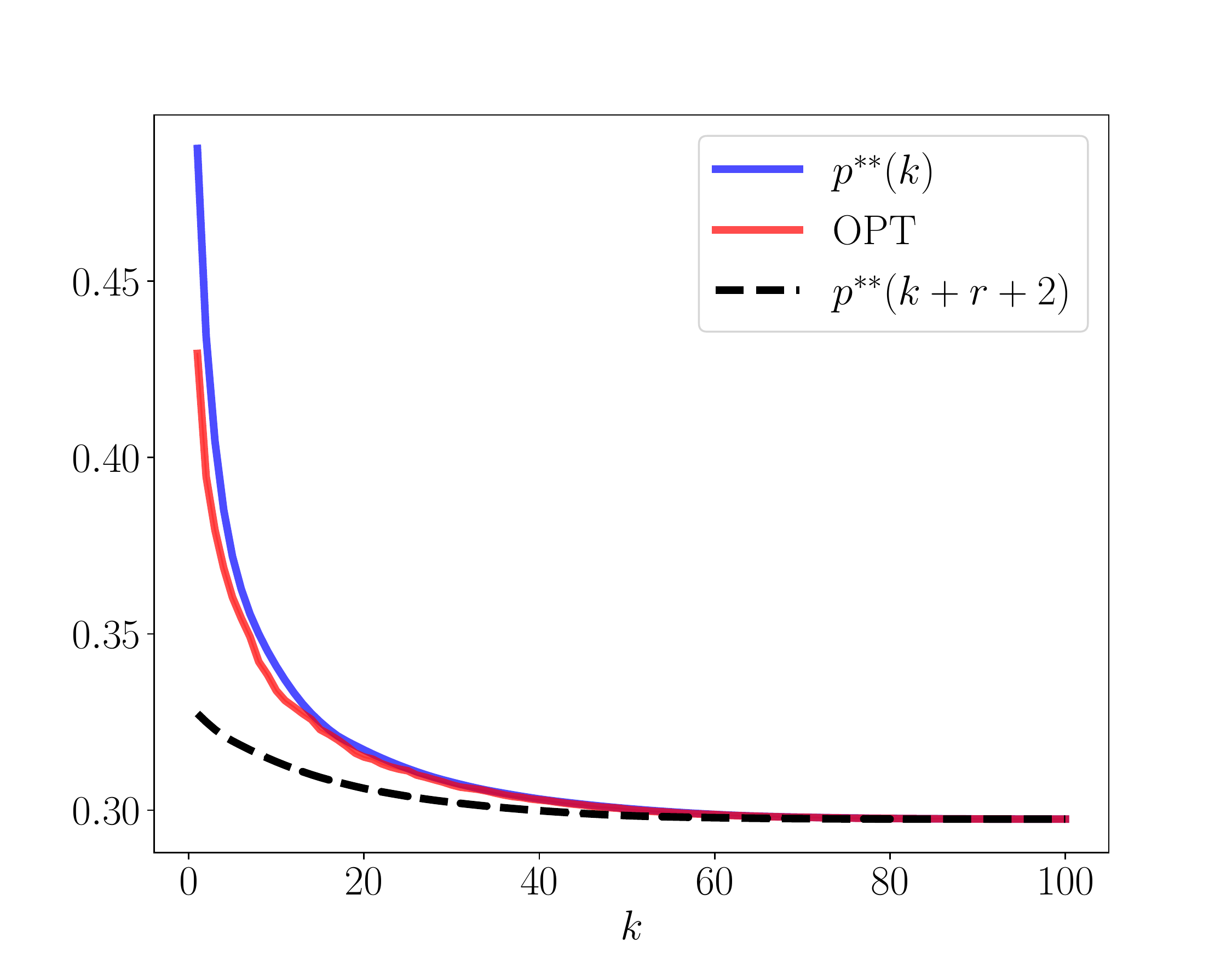}
  \end{minipage}
  \caption{ \textbf{Experiment 1} \textit{(Left)} Linear regression with a $\ell_0$-penalty. \textit{(Right)} Logistic regression with a $\ell_0$-constraint.
  }
  \label{fig:l2pen}
\end{figure}

Note that there are no error bars around OPT despite having solved the primalization linear program with 20 different random linear objectives for each value of the regularization parameter $(\lambda$ or $k)$. This strongly indicates that our feasible set for the linear program is actually a singleton (which was verified by changing the linear objective to arbitrary convex objectives and noting the $\arg\min$ was identical each time). In this case, the solution is identical to the solution that can be inferred from the bidual (since we know the linear program is feasible since the solution of the bidual satisfies the constraints). As a result, primalization simply reduces to rounding the bidual solution to make it primal feasible. Furthermore, note that in the left plot of Figure \ref{fig:l2pen}, we know that the true value $p^\ast_\pen(\lambda)$ must lie somewhere between $OPT$ (red line) and $p^{**}$ (blue line) and that this gap decreases as $\lambda$ decreases. This is also apparent in the right plot of Figure \ref{fig:l2pen} as the marginal importance of the features decreases as $k$ increases. 

\subsection{Experiment 2: Numerical rank bounds}
In this experiment, we plot the bounds outlined by Proposition \ref{prop:fullbound} that combine Shapley Folkman with numerical rank bounds. We generate $X \in \mathbb{R}^{1000 \times 100}$ with bell shaped singular values using the \texttt{make\_low\_rank\_matrix} function in sklearn \cite{pedregosa2011scikit} to get a numerical rank of 10. We then generate $\beta$ and $y$ as in Experiment 1 for the $\ell_2$ loss. As was used to derive the numerical rank bounds, we use a constraint $\|w\|_2^2 \leq \gamma$ with $\gamma = 30$ instead of a ridge penalty. We consider the $\ell_0$-penalized case and fix three values of $\lambda: 10^{-4}, 10^{-3}, 10^{-2}$. In Figure \ref{fig:l2con}, we show how the bounds change as we vary the rank of our approximation $X_r$ from 1 to 100. While running the primalization procedure, we pick a random linear objective 20 times and show the standard deviation in the value of OPT.

From Proposition \ref{prop:fullbound}, we know that $p_\pen(\lambda, X)$ lies between the red and blue lines. For small values of $\lambda$ (e.g. $\lambda = 10^{-4}$) we see that as the rank increases this gap is essentially zero. This means in the case of $\lambda = 10^{-4}$ taking a rank 20 approximation of the data matrix or a rank 100 matrix and doing the procedure highlighted in Section \ref{sec:relaxation_cons} results in two different solutions that are both essentially optimal. Both plots at the bottom of Figure~\ref{fig:l2con} highlight a trade-off in choosing the numerical rank, a lower rank improves the duality gap while it coarsens the objective function approximation, and vice-versa, this is further illustrated in the experiment below.

\begin{figure}[h!]
    \centering
    \includegraphics[width=\textwidth]{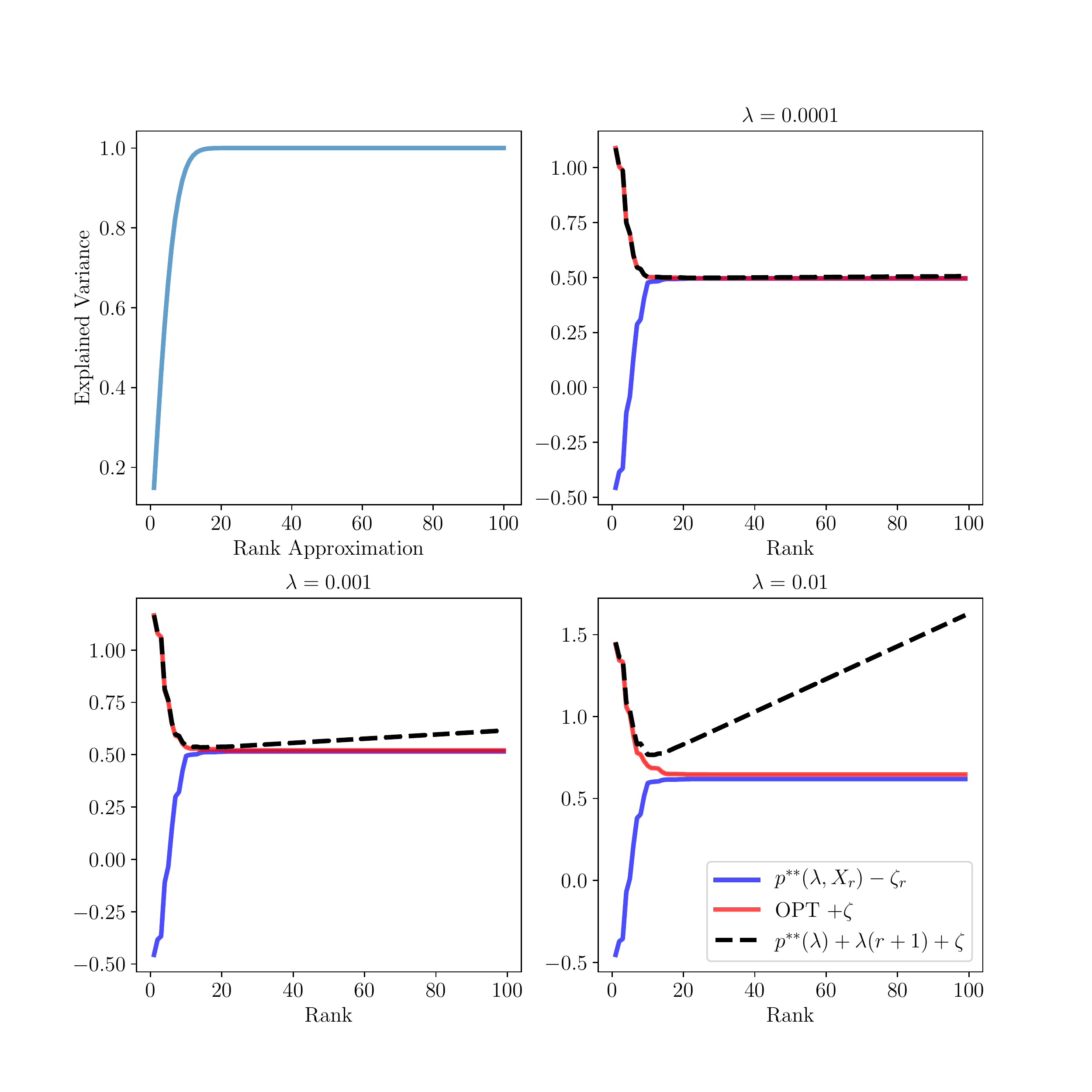}
    \caption{\textbf{Experiment 2} \textit{(Top Left)} Explained variance plot of successive rank-$r$ SVD approximations of $X$. Note that rank 10 explains most of the variance. \textit{(Top Right, Bottom)} Plot of upper and lower bounds on $p_{\pen}(\lambda, X)$ for various values of $\lambda$ as the rank-$r$ approximation of $X$ changes.}
    \label{fig:l2con}
\end{figure}

\subsection{Experiment 3: Numerical Rank Bounds on Natural Data Sets} In this experiment, we generate the same plot as in Experiment 2 but now with real data. Specifically, we use the Leukemia data \cite{dettling2004bagboosting} with $n=72$ binary responses and $m=3751$ features. We scale the data matrix $X$ and then plot the difference between the upper and lower bounds (duality gap) and the difference between the primalized upper bound and lower bounds (primalized gap) in Figure \ref{fig:l2con-leukemia} under a logistic loss with $\lambda = 0.1$ and $\gamma = 50$.

\begin{figure}[h!]
    \centering
    \includegraphics[width=\textwidth]{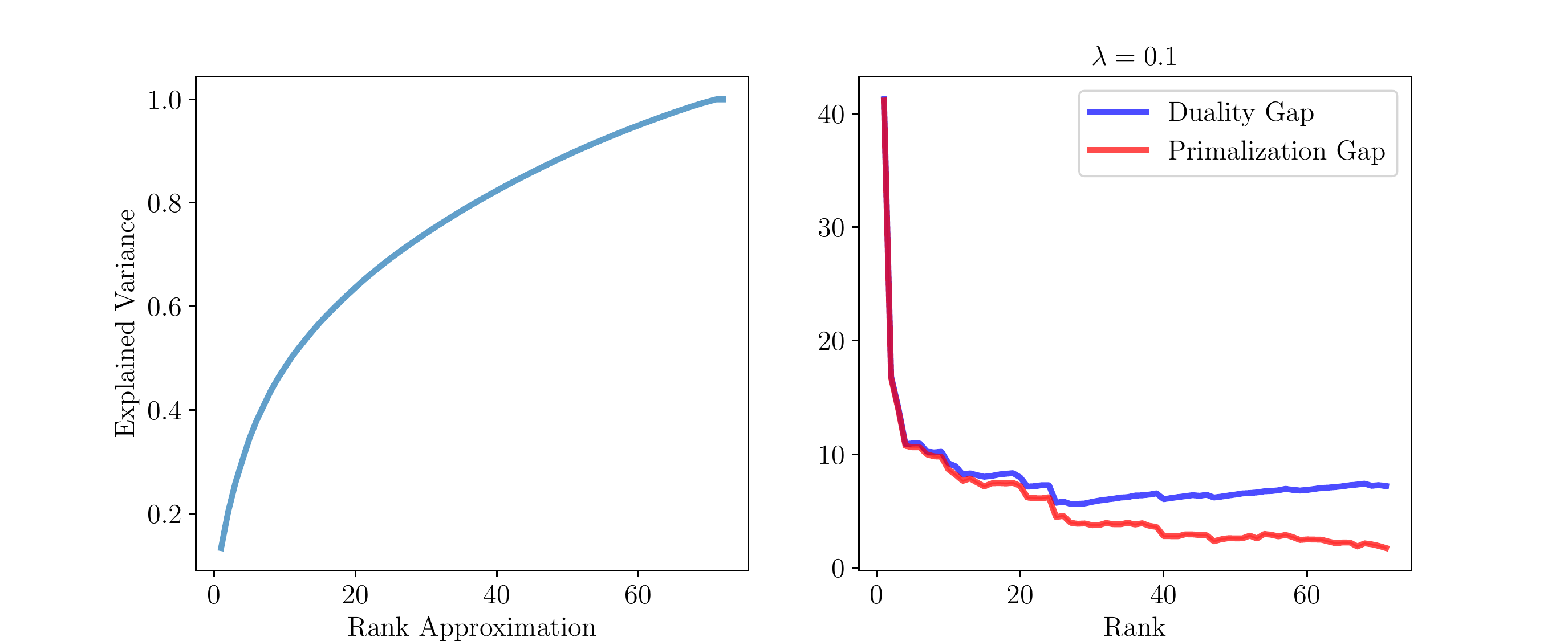}
    \caption{\textbf{Experiment 3} \textit{(Left)} Explained variance plot of successive rank-$r$ SVD approximations of $X$. \textit{(Right)} Difference between upper bounds and primalized value with lower bound as in Proposition \ref{prop:fullbound}.}
    \label{fig:l2con-leukemia}
\end{figure}



 
\bibliography{ref,MainPerso}
\bibliographystyle{unsrt}

\onecolumn

\appendix



\end{document}